\thanks{AMS Subject Classifications: 34K20, 34K06 (primary), 34K25, 34D05 (secondary)}
\DeclareMathOperator*{\esssup}{ess\,sup}
\newtheorem{uess}{Lemma}
\newtheorem{guess}{Theorem}
\newtheorem{corollary}{Corollary}
\newtheorem{proposition}{Proposition}
\newtheorem{example}{Example}
\newtheorem{remark}{Remark}
\begin{document}

\title[Equations with oscillatory kernels]{On exponential stability of linear delay equations with oscillatory coefficients and kernels}
\date{November 18, 2021 - submitted, May 12, 2022 - revised}
\maketitle

\vspace{ -1\baselineskip}

{\small
\begin{center}
 {\sc Leonid Berezansky}   \\
Ben-Gurion University of the Negev,
Beer-Sheva 84105, Israel 
\\[10pt]
{\sc Elena Braverman} \\
University of
Calgary, 2500 University Drive N.W., Calgary, AB,  Canada T2N 1N4
 \end{center}
}

\numberwithin{equation}{section}
\allowdisplaybreaks

 \smallskip

 \begin{quote}
\footnotesize
{\bf Abstract.}  
New explicit exponential stability conditions are presented for the non-autonomous scalar linear functional differential equation 
$$
\dot{x}(t)+ \sum_{k=1}^m a_k(t)x(h_k(t))+\int_{g(t)}^t K(t,s) x(s)ds=0,
$$
where $h_k(t)\leq t$, $g(t)\leq t$, $a_k(\cdot)$ and the kernel $K(\cdot,\cdot)$ are oscillatory and, generally, discontinuous functions.
The proofs are based on establishing boundedness of solutions and later using the exponential dichotomy for linear equations stating that either the homogeneous equation is exponentially stable or a non-homogeneous equation has an unbounded solution for some bounded right-hand side. 
Explicit tests are applied to models of population dynamics, such as controlled Hutchinson and Mackey-Glass equations.
The results are illustrated with numerical examples, and connection to known tests is discussed. 
\end{quote}


\section{Introduction}
\label{introduct}

Equations with oscillatory coefficients and kernels model many
real-world phenomena, such as hematopoiesis with an oscillatory circulation loss rate \cite{Balderrama, Jiang, Zhang3}, 
a neoclassical population growth \cite{Long2,Long} and a computer virus \cite{Tang} models,  
cellular neural networks \cite{Xu}, and an epidemic model  \cite{XuLi}
where an incidence rate is saturated and periodic.
Most of explicit stability tests for a linear functional differential equation (FDE) include
conditions on the sign of coefficients, see, for example, the papers \cite{BB1,BB2,BB5, GH,GP,GD,Kz,SYC,Yoneyama} and the monographs \cite{AS,Hale,KolmMysh}. 
To complement stability results for delay equations with positive coefficients and integral kernels, we consider delay differential equations with, generally, oscillatory coefficients and obtain explicit conditions for uniform exponential stability.

The fundamental function $X(t,s)$ of the scalar ordinary differential equation (ODE)
\begin{equation}
\label{nondelay}
\dot{x}(t)+a(t)x(t) =0
\end{equation}
is a solution of Eq.~\eqref{nondelay}
with the initial condition $X(s,s)=x(s)=1$, it has the form 
 $\displaystyle X(t,s)=e^{-\int_s^t a(\xi)d\xi}$.
Eq.~\eqref{nondelay} is {\em uniformly exponential stable} (UES) if
there are $M>0,\lambda>0$ such that
$$
|X(t,s)| \leq M e^{-\lambda (t-s)} .
$$

An elementary stability result (see e.g. \cite{Li}) is formulated below as a lemma.

\begin{uess}\label{lemma1.1}
Let $a:[0,\infty) \to {\mathbb R}$ be bounded and oscillatory.
Then uniform exponential stability of 
Eq.~\eqref{nondelay}
is guaranteed by anyone of the three hypotheses:
\\
a) there are $a_0>0$ and $t_0\geq 0$ for which 
$\displaystyle \liminf_{t-s\geq a_0} \frac{1}{t-s}\int_s^t a(\tau) d\tau > 0$, $~t\geq s\geq t_0$;
\\
b) there are $a_0>0$, $h>0$ and $t_0\geq 0$ for which $\displaystyle \int_t^{t+h} a(s)ds\geq a_0$, $t\geq t_0$;
\\
c) for some $a_0>0, \alpha_0>0$ and $t_0\geq 0$,
$a(t)=\tilde{a}(t)+\alpha(t)$, where $\displaystyle \tilde{a}(t)\geq a_0>0$ for $t\geq t_0$ and
$\displaystyle \sup_{t\geq s\geq  t_0}\left|\int_s^t \alpha(\xi)d\xi\right|\leq \alpha_0<\infty$.
\end{uess}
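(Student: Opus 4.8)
The plan is to exploit the explicit representation $X(t,s)=e^{-\int_s^t a(\xi)\,d\xi}$: uniform exponential stability is equivalent to a lower bound of the form $\int_s^t a(\xi)\,d\xi\ge\lambda(t-s)-c$ valid for all $t\ge s\ge 0$ with some fixed $\lambda>0$ and $c\ge 0$, since such a bound gives at once $|X(t,s)|\le e^{c}e^{-\lambda(t-s)}$. One checks readily that each of (b) and (c) implies (a) (with the $\liminf$ understood as $t-s\to\infty$, $t\ge s\ge t_0$), so it is enough to derive the lower bound from hypothesis (a) alone. Throughout set $A:=\sup_{t\ge 0}|a(t)|<\infty$.

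For (c)$\Rightarrow$(a): additivity gives $\int_s^t a=\int_s^t\tilde a+\int_s^t\alpha\ge a_0(t-s)-\alpha_0$, hence $\tfrac1{t-s}\int_s^t a\ge a_0-\alpha_0/(t-s)\to a_0>0$. For (b)$\Rightarrow$(a): given $t\ge s\ge t_0$ write $t-s=nh+r$ with $n=\lfloor(t-s)/h\rfloor$ and $0\le r<h$, split $[s,t]$ into the $n$ windows $[s+jh,s+(j+1)h]$, each with left endpoint $\ge t_0$, plus a remainder of length $r$; applying $\int_\tau^{\tau+h}a\ge a_0$ on each window and $|a|\le A$ on the remainder yields $\int_s^t a\ge na_0-Ah\ge\tfrac{a_0}{h}(t-s)-a_0-Ah$, so dividing and letting $t-s\to\infty$ gives $\liminf\tfrac1{t-s}\int_s^t a\ge a_0/h>0$. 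Both arguments use nothing beyond boundedness of $a$ and the stated conditions.

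The core step is (a)$\Rightarrow$ UES. Positivity of the $\liminf$ supplies $\gamma>0$ and $L\ge a_0$ such that $\int_s^t a(\xi)\,d\xi\ge\gamma(t-s)$ for all $t\ge s\ge t_0$ with $t-s\ge L$, whence $|X(t,s)|\le e^{-\gamma(t-s)}$ on that range. For the complementary pairs — $t-s<L$, or $s<t_0$ — I would split off the portion of the integral on $[0,t_0]$ and, when $t-t_0\ge L$, keep applying the bound above on $[t_0,t]$; boundedness of $a$ then gives $\int_s^t a\ge\gamma(t-s)-c_0$ with $c_0$ depending only on $A,L,\gamma,t_0$. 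Combining the two regions yields $\int_s^t a\ge\gamma(t-s)-c_0$ for all $t\ge s\ge 0$, hence $|X(t,s)|\le e^{c_0}e^{-\gamma(t-s)}$, i.e. UES with $\lambda=\gamma$, $M=e^{c_0}$.

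The obstacle I anticipate is purely bookkeeping rather than analysis: turning the $\liminf$ hypothesis in (a) into the quantitative statement ``$\exists\,\gamma,L$ with $\int_s^t a\ge\gamma(t-s)$ for $t-s\ge L$'', and carefully handling the transient region near $t_0$ and for small $t-s$ so that the resulting constant $M$ is genuinely uniform in the initial time $s$. No estimate deeper than $e^{-u}\le e^{|u|}$ ($u\in\mathbb{R}$) and additivity of the integral enters.
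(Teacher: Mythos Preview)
The paper does not supply its own proof of Lemma~\ref{lemma1.1}: it is stated as an elementary known result with a reference to \cite{Li}, so there is no in-paper argument to compare against. Your proposal is correct in substance. Reducing (b) and (c) to (a) and then deriving the uniform lower bound $\int_s^t a\ge \gamma(t-s)-c_0$ from (a) is exactly the natural route, and your estimates are sound; the only places that deserve a line of care are the ones you already flag, namely making explicit the constant $c_0$ that absorbs the transient region $s<t_0$ and the short-interval case $t-s<L$. For instance, using $|a|\le A$ one may take $c_0=(\gamma+A)(t_0+L)$, which gives $|X(t,s)|\le e^{c_0}e^{-\gamma(t-s)}$ uniformly for all $t\ge s\ge 0$. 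Note also that the paper's notation $\liminf_{t-s\ge a_0}$ is most naturally read as the infimum over all pairs with $t-s\ge a_0$ and $t\ge s\ge t_0$, in which case your step ``positivity of the $\liminf$ supplies $\gamma,L$'' is immediate with $L=a_0$; your interpretation as $t-s\to\infty$ is weaker but still suffices, so either reading is fine.
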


It is easy to check that, without essential boundedness of $a$, neither a) nor b) in Lemma~\ref{lemma1.1} imply uniform stability.

If Eq.~\eqref{nondelay} is UES, the function $\int_{t_0}^t e^{-\int_s^t a(\xi)d\xi}ds$  is bounded on $[t_0,\infty)$, which is widely used in justification of estimates in the present paper. Also, exponential estimates of $e^{-\int_s^t a(\xi)d\xi}$ are based on this assumption.

The main object of the paper is a scalar linear equation 
\begin{equation}\label{2.1}
\dot{x}(t)+ \sum_{k=1}^m a_k(t)x(h_k(t))+\int_{g(t)}^t K(t,s) x(s)ds=0.
\end{equation}
For each $t_0\geq 0$ we also introduce a right-hand side
\begin{equation}
\label{2.2}
\dot{x}(t)+ \sum_{k=1}^m a_k(t)x(h_k(t))+\int_{g(t)}^t K(t,s) x(s)ds=f(t), ~~t\geq t_0
\end{equation}
and an initial condition
\begin{equation}
\label{2.3}
x(t)=\varphi(t), ~ t\leq t_0.
\end{equation}
We say that Eq.~(\ref{2.1}) is {\em UES}, if there are
$K>0$ and $\lambda>0$ 
such that any solution $x$ of (\ref{2.1}),(\ref{2.3}) satisfies
\begin{equation}
\label{2.4}
|x(t)|\leq K~e^{-\lambda (t-t_0)}
\left( \sup_{t\leq t_0}|\varphi(t)| \right),~~t\geq t_0,
\end{equation}
where $K, \lambda $ are independent of the choice of the point $t_0$
and of the initial function $\varphi$.

Assume that ODE \eqref{nondelay} with an oscillating coefficient is UES. 
It is generally believed that a small delay, or a delay-coefficient combination, will allow to preserve stability properties of a scalar equation which involves one delay term only
\begin{equation}\label{1.1}
\dot{x}(t)+a(t)x(h(t))=0,~~ t\geq 0.
\end{equation}
It is a natural question whether there is a sufficiently small bound $\tau$ for the delay $0 \leq t-h(t) \leq \tau$ or $a_0$ for the integral 
$\int_{h(t)}^ t |a(s)|ds \leq a_0$ such that, together with exponential stability of \eqref{nondelay}, this estimate
implies exponential stability of equation \eqref{1.1}.
The following example shows that, unlike for Eq.~\eqref{1.1} with $a(t) \geq 0$, such $\tau$ and $a_0$ cannot be found.

\begin{example}
\label{ex_simple}
Consider Eq.~\eqref{1.1} with periodic $a$ and $t-h(t)$ with a period $\varepsilon + \delta$, $a(t) = -1$, $h(t) = t$ for $t\in [(\delta+\varepsilon)n,\varepsilon+(\delta+\varepsilon)n)$ and $a(t) = 1$, $h(t) = n(\delta+\varepsilon)$ for $t\in [\varepsilon+n(\delta+\varepsilon), (n+1)(\varepsilon+\delta))$, $n=0,1,\dots$ and $\delta = e^{\varepsilon}-1$. As $e^x-1>x$ for $x>0$, we have $\delta>\varepsilon$. Thus
\eqref{nondelay} with such $a(t)$ is UES, as condition b) of Lemma~\ref{lemma1.1} holds with $h=\delta+\varepsilon$, $a_0=\delta-\varepsilon>0$. Evidently the solution exponentially grows from $x(0)$ to $x(0)e^{\varepsilon}$ on $[(\delta+\varepsilon)n,\varepsilon+(\delta+\varepsilon)n)$ and then linearly decreases to
$x(0)e^{\varepsilon} - x(0)\delta=x(0)$ at $t=(n+1)(\varepsilon+\delta)$. Thus, \eqref{1.1} is not asymptotically stable, while $\varepsilon$, $\delta$, the maximal delay $\varepsilon + \delta$ and the integral 
$\displaystyle \sup_{t \geq 0} \int_{h(t)}^t |a(s)| \, ds=\varepsilon+\delta$ can be chosen arbitrarily small.
\end{example}

Example~\ref{ex_simple} illustrates that designing sufficient stability tests for delay models with oscillatory coefficients is a very challenging problem even for simplest 
Eq.~\eqref{1.1}. It is not surprising that there are only few papers 
\cite{BB3,BB4,Gil,Gil2,Zhang1,Zhang2} on stability of scalar delay equations with oscillating coefficients. The relation between oscillation and stability for \eqref{1.1} with an oscillating coefficient is discussed in \cite{StavBrav}.
To the best of our knowledge, \cite{Zhang1} is the first paper where explicit stability tests were obtained by application of a fixed-point method.
In \cite{Zhang1}, a nonlinear equation involved both a delay and a non-delay term, its linear counterpart is
\begin{equation}\label{add1}
\dot{x}(t)+a(t)x(t)+b(t)x(h(t))=0.
\end{equation}
The test of \cite[Theorem 2.1]{Zhang1} has the following form for 
Eq.~\ref{add1}.

\begin{proposition}[\cite{Zhang1}]
\label{p6}
Let $a, b, h$ be continuous, $b(t)\geq 0$, $h(t)\leq t$, \\ $\displaystyle \lim_{t\rightarrow\infty} h(t)=\infty$ and
$$
\liminf_{t\rightarrow\infty}\int_0^t a(s)ds>-\infty,~ \int_0^t e^{-\int_s^t a(u)du}b(s)ds\leq \alpha<1, ~~t \geq 0.
$$
Then Eq.~\eqref{add1} is asymptotically stable if and only if $\displaystyle \lim_{t\rightarrow\infty}\int_0^t  a(s)ds=\infty$.
\end{proposition}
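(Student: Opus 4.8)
The plan is to pass to the variation-of-parameters representation of \eqref{add1} relative to its ordinary part \eqref{nondelay}, prove the ``if'' implication by a contraction-mapping argument built on the hypothesis $\int_0^tX(t,s)b(s)\,ds\le\alpha<1$, and prove the ``only if'' implication by contraposition, exhibiting a solution that does not decay.

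\textbf{Reduction.} Write $X(t,s)=\exp\bigl(-\int_s^ta(u)\,du\bigr)$ for the fundamental function of \eqref{nondelay} and $P(t)=X(t,0)$. Every solution of \eqref{add1} with $x=\varphi$ on $(-\infty,t_0]$ satisfies
\[
x(t)=X(t,t_0)\varphi(t_0)-\int_{t_0}^tX(t,s)b(s)x(h(s))\,ds,
\]
equivalently $\bigl(x/P\bigr)'(t)=-P(t)^{-1}b(t)x(h(t))$; the substitution $v=x/P$ will be used in the second part. The condition $\liminf_{t\to\infty}\int_0^ta>-\infty$ together with continuity of $a$ makes $P$ bounded, $0<P(t)\le A$ on $[0,\infty)$. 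A first, unconditional remark is that $\alpha<1$ already forces all solutions to be bounded: splitting $[t_0,t]$ according to whether $h(s)\le t_0$ and using $\int_{t_0}^tX(t,s)b(s)\,ds\le\alpha$ gives $\sup_{t\ge t_0}|x(t)|\le\frac{1+\alpha}{1-\alpha}A\sup|\varphi|$. With attractivity, established below, this yields uniform Lyapunov stability, so it remains to decide whether every solution tends to $0$.

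\textbf{``If''.} Assume $\int_0^ta\to\infty$, so $X(t,t_0)\to0$. Let $C_0$ be the complete metric space of continuous functions on $[t_0,\infty)$ that coincide with $\varphi$ on $(-\infty,t_0]$ and tend to $0$, with the supremum metric, and let $(Tx)(t)$ denote the right-hand side of the integral equation above. Then $|Tx(t)-Ty(t)|\le\int_{t_0}^tX(t,s)b(s)\,ds\,\|x-y\|\le\alpha\|x-y\|$, so $T$ is a contraction. It maps $C_0$ into $C_0$: splitting the integral at a large $T_1$, on $[T_1,t]$ the factor $|x(h(s))|$ is uniformly small because $h(s)\to\infty$ while $\int_{T_1}^tX(t,s)b(s)\,ds\le\alpha$, and on $[t_0,T_1]$ one has $\int_{t_0}^{T_1}X(t,s)b(s)\,ds=X(t,T_1)\int_{t_0}^{T_1}X(T_1,s)b(s)\,ds\le\alpha\,X(t,T_1)\to0$; the initial-data contribution over $\{h(s)\le t_0\}$ and the term $X(t,t_0)\varphi(t_0)$ are handled likewise. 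Since $T$ is also a contraction on the larger space of bounded continuous extensions of $\varphi$, its unique fixed point there is the solution, hence lies in $C_0$: every solution tends to $0$, and with the estimate above \eqref{add1} is asymptotically stable.

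\textbf{``Only if''.} Argue by contraposition: suppose $\int_0^ta(s)\,ds$ does not tend to $+\infty$. Then $\ell:=\liminf_{t\to\infty}\int_0^ta$ is finite, and one may choose $t_n\to\infty$ with $\int_0^{t_n}a\to\ell$, so $P(t_n)\to e^{-\ell}>0$. Evaluating the hypothesis at $t=t_n$ and letting $n\to\infty$ yields $\int_0^\infty P(s)^{-1}b(s)\,ds\le\alpha e^{\ell}<\infty$. Take the solution $x$ with $\varphi\equiv1$; since $x$ is bounded, integrating $\bigl(x/P\bigr)'=-P^{-1}b\,x(h)$ shows $v:=x/P$ has a finite limit $v_\infty$ at $\infty$ and solves $v'(t)+q(t)v(h(t))=0$ with $q(t)=b(t)P(h(t))/P(t)\ge0$, $\int_0^\infty q<\infty$, and $v\equiv1$ on $(-\infty,0]$. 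Using $q\ge0$: wherever $v\ge0$ one has $v\le1$, so a first instant $t_\ast$ with $v(t_\ast)=0$ would force $\int_0^{t_\ast}q\ge1$, contradicting a bound $\int_0^\infty q<1$ that must be squeezed out of $\alpha<1$; hence $v$ is positive and non-increasing, so $v_\infty>0$, and then $x(t_n)=P(t_n)v(t_n)\to e^{-\ell}v_\infty>0$, contradicting that every solution tends to $0$.

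\textbf{Expected main obstacle.} The delicate step is precisely the last one: converting the single scalar inequality $\int_0^tX(t,s)b(s)\,ds\le\alpha<1$ into the strict bound $\int_0^\infty q<1$, i.e.\ $\int_0^\infty P(s)^{-1}b(s)P(h(s))\,ds<1$, that keeps the renormalized solution $v$ away from $0$. This forces one to use the hypothesis for \emph{all} $t$, not merely along $t_n$, to exploit $h(s)\to\infty$ (so that $P(h(s))$ is eventually dominated by $\limsup_{u\to\infty}P(u)=e^{-\ell}$), and to use the sign condition $b\ge0$; tracking the interaction of these three ingredients is the crux of the argument, while the ``if'' direction is routine once the contraction is set up.
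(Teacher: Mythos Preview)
The paper does not prove Proposition~\ref{p6}; it is quoted verbatim from \cite{Zhang1} as background and no argument is supplied, so there is no ``paper's own proof'' to compare your proposal against. Your ``if'' direction is essentially the contraction-mapping proof of \cite{Zhang1} (indeed the title of that reference is ``Contraction mapping and stability in a delay-differential equation''), and the set-up and estimates you give are the standard ones.

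For the ``only if'' direction, however, you have a genuine gap that you yourself flag but do not close. The entire contrapositive argument hinges on the claim $\int_0^\infty q(s)\,ds<1$ with $q(s)=b(s)P(h(s))/P(s)$, which you describe as something that ``must be squeezed out of $\alpha<1$''. The hypothesis gives only $P(t)\int_0^t P(s)^{-1}b(s)\,ds\le\alpha$, and along the minimizing sequence $t_n$ this yields $\int_0^\infty P(s)^{-1}b(s)\,ds\le\alpha e^{\ell}$, which is \emph{not} bounded by $1$ in general. Inserting the extra factor $P(h(s))$ does not obviously rescue the estimate: you would need $P(h(s))\le e^{-\ell}$ eventually, but $P$ may overshoot $e^{-\ell}$ infinitely often, and the interaction between the tail where $P(h(s))$ is close to $e^{-\ell}$ and the head where it is not is exactly what you leave unquantified. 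Without this bound your positivity argument for $v$ collapses, and with it the conclusion $x(t_n)\to e^{-\ell}v_\infty>0$. In short, the sufficiency half is fine, but the necessity half is at present a plan rather than a proof.
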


For \eqref{1.1} with a constant delay 
\begin{equation}\label{1.5}
\dot{x}(t)+a(t)x(t-\tau)=0,
\end{equation}
measurable $a(t)$ and $\tau >0$, the following statement is a corollary of the main result in \cite{Gil}.

\begin{proposition} [\cite{Gil}] 
\label{p7}
Assume that there exists a constant $b>0$ such that
$$
\sup_{t>0}\left|\int_0^t (a(s)-b)ds\right|<\frac{b}{2b+\sup_{t>0}|a(t)|}
\mbox{~~ and ~~} b<\frac{1}{\tau e} \, .
$$
Then Eq.~\eqref{1.5} is UES.
\end{proposition}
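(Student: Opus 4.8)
\smallskip
\noindent\emph{Proof proposal.} The plan is to deduce uniform exponential stability from an \emph{a priori boundedness} property through the Bohl--Perron type dichotomy recalled in the Introduction, comparing Eq.~\eqref{1.5} with the autonomous model $\dot y(t)+by(t-\tau)=0$ and absorbing the oscillation of $a$ by an integration by parts against the bounded primitive of $a-b$. First I would fix $b$ as in the hypothesis, set $\alpha:=a-b$ and $\Phi(t):=\int_0^t\alpha(s)\,ds$, so that $\gamma:=\sup_{t>0}|\Phi(t)|<\dfrac{b}{2b+A}$ with $A:=\sup_{t>0}|a(t)|$, and let $Y(v)$ be the fundamental function of $\dot y(t)+by(t-\tau)=0$. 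Since $b\tau<1/e$, the characteristic equation $\lambda+be^{-\lambda\tau}=0$ has a negative real root, hence $Y(v)>0$ for all $v\ge0$; integrating the equation for $Y$ over $[0,\infty)$ and using $Y(0)=1$, $Y(\infty)=0$ and $Y\ge0$ produces the two uniform bounds $\int_0^\infty Y(v)\,dv=1/b$ and $\int_0^\infty|Y'(v)|\,dv=b\int_0^\infty Y(v-\tau)\,dv=1$, which are the origin of the denominator $2b+A$.

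By the dichotomy it suffices to show that, for every $f\in L^\infty[t_0,\infty)$, the solution $x$ of $\dot x(t)+a(t)x(t-\tau)=f(t)$ with $x(t)=0$ for $t\le t_0$ stays bounded on $[t_0,\infty)$, uniformly in $t_0$ and $f$. Rewriting the equation as $\dot x(t)+bx(t-\tau)=f(t)-\alpha(t)x(t-\tau)$ and using the Cauchy representation through $Y$, one obtains
\[
x(t)=\int_{t_0}^t Y(t-s)f(s)\,ds-\int_{t_0}^t Y(t-s)\alpha(s)x(s-\tau)\,ds.
\]
Here the first integral is at most $\|f\|_\infty/b$. In the second I would write $\alpha(s)\,ds=d\Phi(s)$ and integrate by parts: the endpoint term at $s=t_0$ vanishes because $x(t_0-\tau)=0$, the endpoint term at $s=t$ is $\Phi(t)x(t-\tau)$, and the interior integral contains $Y'(t-s)x(s-\tau)$ and $Y(t-s)\dot x(s-\tau)$ with $\dot x(s-\tau)=f(s-\tau)-a(s-\tau)x(s-2\tau)$. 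Bounding $|\Phi|\le\gamma$ and using the two uniform bounds above, with $M(t):=\sup_{t_0\le s\le t}|x(s)|$ one arrives at
\[
|x(t)|\le\frac{1+\gamma}{b}\,\|f\|_\infty+\Bigl(2\gamma+\frac{\gamma A}{b}\Bigr)M(t).
\]

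Since the same inequality holds with $t$ replaced by any $s\in[t_0,t]$, it upgrades to $M(t)\le\frac{1+\gamma}{b}\|f\|_\infty+\gamma\,\frac{2b+A}{b}\,M(t)$; as $\gamma(2b+A)/b<1$ by hypothesis, this gives $\sup_{t\ge t_0}|x(t)|\le\dfrac{(1+\gamma)\,\|f\|_\infty}{\,b-\gamma(2b+A)\,}$, a bound independent of $t_0$ and of $f$. The Bohl--Perron dichotomy then yields that Eq.~\eqref{1.5} is UES.

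The delicate step is the integration by parts: one has to check that $s\mapsto Y(t-s)x(s-\tau)$ is absolutely continuous on $[t_0,t]$ (it is, since $x$ is locally Lipschitz and $Y$ is piecewise $C^1$), carry the merely essentially bounded factor $\dot x(s-\tau)$ through the estimate correctly (harmless, as it is integrated against $Y\in L^1$), and---above all---obtain the \emph{sharp} constants $\int_0^\infty Y=1/b$ and $\int_0^\infty|Y'|=1$ rather than cruder bounds; this is precisely where positivity of $Y$, i.e.\ the assumption $b<1/(\tau e)$, is indispensable. A secondary point is to invoke the Bohl--Perron dichotomy in the version that yields \emph{uniform} exponential decay with constants independent of $t_0$, which is standard for this class of equations.
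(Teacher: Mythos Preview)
The paper does not give its own proof of Proposition~\ref{p7}; it is quoted from \cite{Gil} as ``a corollary of the main result'' there, so there is no in-paper argument to compare against.

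That said, your proof is correct and self-contained. The crucial ingredients---positivity of the autonomous fundamental solution $Y$ for $b\tau<1/e$, the exact identities $\int_0^\infty Y=1/b$ and $\int_0^\infty|Y'|=1$, and the integration by parts against the bounded primitive $\Phi$ of $a-b$---combine to give precisely the contraction constant $\gamma(2b+A)/b<1$ that the hypothesis delivers. The technical points you flag (absolute continuity of $s\mapsto Y(t-s)x(s-\tau)$, handling of $\dot x(s-\tau)$ on $[t_0,t_0+\tau]$ via the zero initial data) are genuine but routine. One minor expositional remark: the implication ``negative real characteristic root $\Rightarrow$ $Y>0$'' is correct but is really the classical equivalence $b\tau\le 1/e \Leftrightarrow Y>0$ for the scalar autonomous delay equation, which you might cite explicitly.

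Methodologically, your argument is in the spirit of the present paper (Bohl--Perron reduction to a boundedness estimate), but with a twist: you compare against the \emph{autonomous delay} equation $\dot y+by(t-\tau)=0$ rather than against an ODE as in Theorems~\ref{theorem2.1}--\ref{theorem2.1a}. This is what produces the sharp denominator $2b+A$ and explains why the delay bound $b<1/(\tau e)$ enters separately, through positivity of $Y$, rather than through an integral condition of the type~\eqref{2.5}. Whether this coincides with Gil's original derivation cannot be determined from the present paper.
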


In \cite{BB3} equation \eqref{1.1} with measurable $a$ and $h$ was considered.
The following stability test is a corollary of the main result of the paper.

\begin{proposition}[\cite{BB3}]
\label{p8}
If ODE $\dot{x}(t)+a(t)x(t)=0$ is UES and
$$
\limsup_{r \rightarrow\infty}\left[\left(\sup_{t\geq r}\int_{h(t)}^t |a(s)|ds\right)
\left(\sup_{t\geq r}\int_r^t e^{-\int_s^t a(\tau)d\tau}|a(s)|ds\right)\right]<1,
$$
Eq.~\eqref{1.1} is also UES.
\end{proposition}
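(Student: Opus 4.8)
The strategy is to reduce uniform exponential stability of \eqref{1.1} to a boundedness statement and then invoke the exponential dichotomy (a Bohl--Perron-type principle) mentioned in the abstract: it suffices to show that, for every bounded $f$, the solution $x$ of $\dot x(t)+a(t)x(h(t))=f(t)$ for $t\ge t_0$, with $x(t)\equiv0$ for $t\le t_0$, is bounded on $[t_0,\infty)$. I will use the standing assumptions on the equation ($a$ measurable and locally integrable, $h(t)\le t$, $\lim_{t\to\infty}h(t)=\infty$, bounded delay), the assumed UES of the ODE \eqref{nondelay} in the form $|X(t,s)|\le Me^{-\lambda(t-s)}$ with $X(t,s)=e^{-\int_s^t a(\xi)d\xi}$ together with boundedness of $t\mapsto\int_{t_0}^t|X(t,s)|\,ds$, and the hypothesis itself. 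Writing $\alpha(r):=\sup_{t\ge r}\int_{h(t)}^t|a(s)|\,ds$ and $\beta(r):=\sup_{t\ge r}\int_r^t|X(t,s)|\,|a(s)|\,ds$, the hypothesis reads $\lim_{r\to\infty}\alpha(r)\beta(r)<1$; since $\alpha,\beta$ are non-increasing and $\lim h=\infty$, I may fix $r_0\ge t_0$ with $h(t)\ge t_0$ for all $t\ge r_0$, $\beta(r_0)<\infty$, and $q:=\alpha(r_0)\beta(r_0)<1$.

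The main step is an a priori bound. Rewrite the equation as a perturbed ODE using $x(h(t))=x(t)-\int_{h(t)}^t\dot x(\xi)\,d\xi$ and then the substitution $\dot x(\xi)=f(\xi)-a(\xi)x(h(\xi))$ coming from the equation itself:
\[
\dot x(t)+a(t)x(t)=f(t)+a(t)\int_{h(t)}^t f(\xi)\,d\xi-a(t)\int_{h(t)}^t a(\xi)x(h(\xi))\,d\xi .
\]
The variation-of-constants formula on $[r_0,t]$ then gives
\[
x(t)=X(t,r_0)x(r_0)-\int_{r_0}^t X(t,s)\,a(s)\!\left(\int_{h(s)}^s a(\xi)x(h(\xi))\,d\xi\right)ds+R(t),
\]
where $R(t)=\int_{r_0}^t X(t,s)f(s)\,ds+\int_{r_0}^t X(t,s)\,a(s)\int_{h(s)}^s f(\xi)\,d\xi\,ds$ is bounded on $[r_0,\infty)$ (the first term by $\|f\|_\infty\sup_t\int_{t_0}^t|X(t,s)|\,ds$, the second by $\|f\|_\infty\tau_0\beta(r_0)$, where $\tau_0$ bounds the delay).

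Now pass to supremum norms. Choose $S_0\ge r_0$ with $h(s)\ge r_0$ for $s\ge S_0$. In the inner integral, when $h(\xi)\ge r_0$ one has $|x(h(\xi))|\le N:=\sup_{\sigma\ge r_0}|x(\sigma)|$, while for the remaining $\xi$ the value $x(h(\xi))$ lies on the compact interval $[t_0,S_0]$ or equals $0$, hence is bounded by a constant; combined with $\int_{h(s)}^s|a|\le\alpha(r_0)$ for $s\ge r_0$ this yields $\big|\int_{h(s)}^s a(\xi)x(h(\xi))\,d\xi\big|\le\alpha(r_0)N+C_0$ with $C_0$ independent of $t$. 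Multiplying by $|X(t,s)||a(s)|$, integrating, and using $\int_{r_0}^t|X(t,s)||a(s)|\,ds\le\beta(r_0)$ bounds the middle term by $\alpha(r_0)\beta(r_0)N+C_0\beta(r_0)=qN+C_1$. Hence $|x(t)|\le qN+C$ for all $t\ge r_0$, with $C$ independent of $t$; taking the supremum over $[r_0,T]$, $T<\infty$ arbitrary, gives $\sup_{[r_0,T]}|x|\le C/(1-q)$ uniformly in $T$ because $q<1$, so $x$ is bounded on $[t_0,\infty)$, and the dichotomy yields UES of \eqref{1.1}.

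I expect the real work to be the bookkeeping in the last step rather than any isolated hard point: one must split the region where $h(s)\ge r_0$ (which carries the contraction factor $q<1$) from the bounded remaining region so that the contribution of $x$ on the initial compact interval is absorbed into the additive constant without inflating the coefficient of $N$; one must carefully derive the uniform bounds on $\int_{t_0}^t|X(t,s)|\,ds$ and on $\int_{r_0}^t|X(t,s)||a(s)|\,ds$ from the UES of the ODE and the hypothesis; and one needs the Bohl--Perron principle in a form valid for equations with measurable coefficients and bounded delay, which is exactly the exponential dichotomy referred to in the abstract.
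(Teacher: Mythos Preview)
Your proposal is correct and follows essentially the same route as the paper. The paper does not prove Proposition~\ref{p8} directly (it is cited from \cite{BB3}), but in the Discussion it observes that Theorem~\ref{theorem2.1b} specialized to $m=1$ with no integral term yields exactly this statement; the proof of Theorem~\ref{theorem2.1b} is precisely your scheme: apply the Bohl--Perron principle (Lemma~\ref{lemma2.1}), rewrite $x(h(t))=x(t)-\int_{h(t)}^t\dot x$, substitute $\dot x$ from the equation itself (this is the refinement over Theorem~\ref{theorem2.1}, which instead uses the cruder a~priori bound \eqref{2.6b}), apply variation of constants against $X(t,s)=e^{-\int_s^t a}$, and close the contraction with $q<1$. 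The only cosmetic difference is that the paper simply chooses the Bohl--Perron base point $t_0$ large enough that \eqref{2.5a} holds with a fixed $\alpha<1$ (so the zero initial condition kills all terms with $h(\xi)<t_0$), whereas you keep a separate $t_0$ and $r_0$ and explicitly absorb the compact initial segment into the additive constant; both handle the $\limsup$ hypothesis in the same way.
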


The present paper extends the results of \cite{BB3} to scalar equations with several delay terms and an integral term, with coefficients and delays being measurable, and solutions absolutely continuous. We apply these exponential stability results 
to equations of population dynamics.

The structure of the paper is outlined as follows.
Section~\ref{prelim} contains the main results for a generalization of \eqref{1.1} with several terms and a distributed delay.
Based on these statements,  in Section~\ref{applications} we analyze local exponential stability for modified versions of Hutchinson and Mackey-Glass equations with a variable control. Section~\ref{examples} includes examples for linear and nonlinear models, and Section~\ref{conclusion} concludes the paper with a brief summary and overview of possible extensions.

\section{Main Results}
\label{prelim}


For problem~\eqref{2.2},\eqref{2.3} and its special cases, everywhere we assume without further mentioning that
the coefficients $a_k(t)$, the function $f(t)$, as well as the kernel $K(t,s)$, are Lebesgue measurable essentially 
bounded functions in their domains $[0,\infty)$ and $[0,\infty)\times [0,\infty)$, respectively,
$h_k(t)$ and $g(t)$ are measurable delayed arguments with bounded delays
\begin{equation*}
0\leq t-h_k(t)\leq \tau_k\leq \tau=\max_k\tau_k,~~ 0\leq t-g(t)\leq \sigma.
\end{equation*}
In initial condition \eqref{2.3}, $\varphi :[t_0-\max\{\tau,\sigma\},t_0] \rightarrow {\mathbb R}$ is Borel
measurable and bounded.
\vspace{2mm}

By a solution $x:[t_0-\max\{\tau,\sigma\}, \infty) \rightarrow {\mathbb R}$ of \eqref{2.2},\eqref{2.3} 
we mean a function satisfying \eqref{2.2} almost everywhere (a.e.) 
and \eqref{2.3} everywhere, where $x$ is 
locally absolutely continuous on $[0,\infty)$.
All the equalities and inequalities with the derivative are assumed to hold a.e.


As usual, ${\bf L}_{\infty}[t_0,\infty)$ is the space of all measurable essentially
bounded functions $y:[t_0,\infty) \to {\mathbb R}$ with the 
norm 
$\displaystyle 
\|y\|_{[t_0,\infty)}=\esssup_{t\geq t_0} |y(t)|,
$ similarly for any interval $J=[t_0,t_1]$, $t_1>t_0$, 
${\bf L}_{\infty}(J)$ has the norm
$
\|y\|_{J}=\esssup_{t\in J} |y(t)|
$, 
${\bf C}[t_0,\infty)$ is the space of all continuous  bounded functions on  
$[t_0,\infty)$ with the $\sup$-norm. 

The Bohl-Perron type theorem is stated below as a lemma, it is a main tool in stability studies. 

\begin{uess}[\cite{AS}]
\label{lemma2.1}
Let for some $t_0\geq 0$ and any $f\in 
{\bf L}_{\infty}[t_0,\infty)$ the solution of Eq.~\eqref{2.2}
with the initial condition $x(t)=0$, $t\leq t_0$
be in ${\bf C}[t_0,\infty)$. Then, Eq.~(\ref{2.1}) is UES.
\end{uess}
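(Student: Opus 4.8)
The plan is to reduce the statement to a bound on the Cauchy (fundamental) function $C(t,s)$ of the homogeneous equation~\eqref{2.1} and then to combine the hypothesis with an exponential change of variables. Recall that in the Azbelev--Simonov framework~\cite{AS} equation~\eqref{2.1} has a Cauchy function $C(t,s)$ (with $C(\cdot,s)$ solving~\eqref{2.1} on $[s,\infty)$, $C(s,s)=1$, and $C(t,s)=0$ for $t<s$), and the solution of~\eqref{2.2},\eqref{2.3} equals the solution of the homogeneous problem with data $\varphi$ plus $\int_{t_0}^{t}C(t,s)f(s)\,ds$. Moreover, if $x$ solves the homogeneous equation~\eqref{2.1} with $x=\varphi$ on $[t_1-\max\{\tau,\sigma\},t_1]$, then extending $x$ by $0$ for $t<t_1$ and transferring to the right-hand side the parts of $a_k(t)x(h_k(t))$ with $h_k(t)<t_1$ and of $\int_{g(t)}^{t}K(t,s)x(s)\,ds$ with $g(t)<t_1$, one sees that $x$ solves~\eqref{2.2} on $[t_1,\infty)$ with zero initial condition and a forcing $f_\varphi$ supported on $[t_1,t_1+\max\{\tau,\sigma\}]$ with $\|f_\varphi\|_{[t_1,\infty)}\le c_0\sup_{t\le t_1}|\varphi(t)|$, where $c_0=\sum_{k=1}^{m}\|a_k\|_{[0,\infty)}+\sigma\,\esssup_{t,s}|K(t,s)|$. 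Hence $x(t)=\int_{t_1}^{t}C(t,s)f_\varphi(s)\,ds$, and every solution is controlled by the quantities $\int_{t_1}^{t}|C(t,s)|\,ds$; so it suffices either to prove an estimate $|C(t,s)|\le Ne^{-\gamma(t-s)}$ or, as carried out below, to bound the exponentially weighted solution directly.

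By the hypothesis, the Cauchy operator $(\mathcal{C}f)(t)=\int_{t_0}^{t}C(t,s)f(s)\,ds$ is defined on all of ${\bf L}_{\infty}[t_0,\infty)$ with values in ${\bf C}[t_0,\infty)$ (solutions are automatically continuous, so being in ${\bf C}$ just means being bounded). Its graph is closed --- if $f_n\to f$ in ${\bf L}_{\infty}$ and $\mathcal{C}f_n\to x$ uniformly, passing to the limit in the integrated form of~\eqref{2.2} gives $x=\mathcal{C}f$ --- so by the closed graph theorem $\|\mathcal{C}\|=M<\infty$. Fixing $t$ and testing with $f(s)=\operatorname{sgn}C(t,s)$ on $[t_0,t]$ (and $0$ elsewhere) yields $\int_{t_0}^{t}|C(t,s)|\,ds\le M$ for every $t\ge t_0$, and therefore $\int_{t_1}^{t}|C(t,s)|\,ds\le M$ for all $t_0\le t_1\le t$. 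Since $C$ is intrinsic and, by Gronwall's inequality, bounded on the compact region $\{(t,s):0\le s\le t_0,\ s\le t\le t_0+\max\{\tau,\sigma\}\}$, a routine finite-interval argument extends this to $\int_{t_1}^{t}|C(t,s)|\,ds\le M^{*}$ for all $0\le t_1\le t$, with $M^{*}$ independent of $t_1$.

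To obtain exponential decay, fix $\gamma>0$, take an arbitrary $t_1\ge0$ and a solution $x$ of~\eqref{2.1} with data $\varphi$ at $t_1$, and set $y(t)=e^{\gamma(t-t_1)}x(t)$. Using $e^{\gamma(t-t_1)}x(h_k(t))=e^{\gamma(t-h_k(t))}y(h_k(t))$ and $e^{\gamma(t-t_1)}x(s)=e^{\gamma(t-s)}y(s)$ one checks that $y$ satisfies~\eqref{2.2} with initial function $\tilde\varphi(t)=e^{\gamma(t-t_1)}\varphi(t)$, $\|\tilde\varphi\|\le\sup|\varphi|$, and right-hand side
$$(Ry)(t)=\gamma\,y(t)-\sum_{k=1}^{m}a_k(t)\bigl(e^{\gamma(t-h_k(t))}-1\bigr)y(h_k(t))-\int_{g(t)}^{t}K(t,s)\bigl(e^{\gamma(t-s)}-1\bigr)y(s)\,ds .$$
Since $0\le t-h_k(t)\le\tau$ and $0\le t-g(t)\le\sigma$, we have $\|Ry\|_{[t_1,T]}\le\varepsilon(\gamma)\max\bigl\{\|y\|_{[t_1,T]},\sup|\varphi|\bigr\}$ with $\varepsilon(\gamma)=\gamma+\bigl(e^{\gamma\max\{\tau,\sigma\}}-1\bigr)c_0\to0$ as $\gamma\to0^{+}$. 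By the variation-of-constants formula and the reduction of the first paragraph, $y(t)=\hat y(t)+\int_{t_1}^{t}C(t,s)(Ry)(s)\,ds$ on $[t_1,\infty)$, where $\hat y$, the homogeneous solution carrying $\tilde\varphi$, satisfies $\|\hat y\|_{[t_1,\infty)}\le c_0 M^{*}\sup|\varphi|$. Restricting to $[t_1,T]$ and using $\int_{t_1}^{t}|C(t,s)|\,ds\le M^{*}$ gives $\|y\|_{[t_1,T]}\le c_0 M^{*}\sup|\varphi|+M^{*}\varepsilon(\gamma)\max\bigl\{\|y\|_{[t_1,T]},\sup|\varphi|\bigr\}$; choosing $\gamma>0$ small enough that $M^{*}\varepsilon(\gamma)\le\tfrac12$ and absorbing (legitimate since $\|y\|_{[t_1,T]}<\infty$) yields $\|y\|_{[t_1,T]}\le C_1\sup|\varphi|$, uniformly in $T$. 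Letting $T\to\infty$, $|x(t)|=e^{-\gamma(t-t_1)}|y(t)|\le C_1 e^{-\gamma(t-t_1)}\sup_{t\le t_1}|\varphi(t)|$ for all $t\ge t_1$, and since $\gamma$, $C_1$, $M^{*}$ depend only on the structural constants and on $M$, this is precisely~\eqref{2.4}, so~\eqref{2.1} is UES.

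I expect the main obstacle to be the bookkeeping in the perturbation step: one must verify that after the substitution $y=e^{\gamma(t-t_1)}x$ the extra terms constitute an ${\bf L}_{\infty}$ forcing of size $\le\varepsilon(\gamma)\|y\|$ \emph{uniformly in $t\ge t_1$} --- this is exactly where boundedness of the delays is used, since it makes $e^{\gamma(t-h_k(t))}-1$ and $e^{\gamma(t-s)}-1$ uniformly small --- and that the contribution of the initial function through the delayed and distributed terms is uniformly bounded by a multiple of $\sup|\varphi|$. The remaining point, transferring the single distinguished $t_0$ of the hypothesis to all initial times $t_1\ge0$, is routine but must be handled by finite-interval Gronwall estimates on $[0,t_0]$. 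Alternatively, one may first establish the pointwise bound $|C(t,s)|\le Ne^{-\gamma(t-s)}$ by applying the same sign-function test to the Cauchy operator of the $\gamma$-accelerated equation (bounded by the same perturbation argument) together with a short-time Gronwall bound on $C$, after which the first paragraph gives~\eqref{2.4} directly.
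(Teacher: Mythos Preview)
The paper does not prove Lemma~\ref{lemma2.1}; it is quoted from Azbelev--Simonov \cite{AS} as a known Bohl--Perron theorem and used as a black box. So there is no ``paper's proof'' to compare against---your write-up is an attempt to supply what the authors merely cite.

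Your overall strategy (closed graph theorem to get $\|\mathcal{C}\|<\infty$, then an exponential shift $y=e^{\gamma(t-t_1)}x$ and a perturbation argument to upgrade uniform boundedness to exponential decay) is the standard route to Bohl--Perron results and is sound in spirit. There is, however, a genuine gap in the reduction step of your first paragraph. When you extend the homogeneous solution $x$ by $0$ for $t<t_1$, the resulting function does \emph{not} solve \eqref{2.2} with the zero initial condition: it still satisfies $x(t_1)=\varphi(t_1)$, not $x(t_1)=0$. The correct variation-of-constants formula is
\[
x(t)=C(t,t_1)\varphi(t_1)+\int_{t_1}^{t}C(t,s)f_\varphi(s)\,ds,
\]
and the first term is missing from your argument. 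This omission propagates: your bound $\|\hat y\|_{[t_1,\infty)}\le c_0M^{*}\sup|\varphi|$ on the homogeneous part is unjustified, because it tacitly assumes $\hat y(t_1)=0$. Nothing you have proved so far controls $\sup_{t\ge s}|C(t,s)|$, only $\sup_{t}\int_{t_1}^{t}|C(t,s)|\,ds$, and the former does not follow from the latter without additional work.

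The fix is exactly the alternative you mention in your last sentence: first establish the pointwise estimate $|C(t,s)|\le N e^{-\gamma(t-s)}$ by applying the closed-graph/perturbation machinery to the Cauchy operator of the $\gamma$-shifted equation (whose Cauchy function is $e^{\gamma(t-s)}C(t,s)$), combined with a short-time Gronwall bound on $C$. Once that pointwise bound is in hand, the full formula above immediately gives \eqref{2.4}. Organizing the proof that way closes the gap cleanly; as written, the ``direct'' route you chose does not.
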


Denote 
\begin{equation}
\label{coef_def}
\begin{array}{l}
\displaystyle
a(t)=\sum_{k=1}^m a_k(t), ~ b(t)=\int\limits_{[g(t),t]\cap [t_0,\infty)} 
\!\!\!\!
K(t,s)ds,~ \overline{b}(t)=\int\limits _{[g(t),t]\cap [t_0,\infty)} \!\!\!\! |K(t,s)|ds, \vspace{2mm} \\
\displaystyle
c(t)=\int\limits_{[g(t),t]\cap [t_0,\infty)} \!\!\!\! (t-s)|K(t,s)|ds, ~~ A=\sum_{k=1}^m \|a_k\|_{[t_0,\infty)}+\|\overline{b}\|_{[t_0,\infty)}.
\end{array}
\end{equation}

\begin{guess}\label{theorem2.1}
Assume that there are  $t_0\geq 0$ and $\alpha \in (0,1)$ such that for $t_0\geq 0$ at least one of the following conditions holds:
\\
a) ODE $\dot{x}(t)+a(t)x(t)=0$ is UES and 
\begin{equation}
\label{2.5}
 \int_{t_0}^t e^{-\int_s^t a(\xi)d\xi}\left[A\sum_{k=1}^m\tau_k|a_k(s)|+\overline{b}(s)\right]ds\leq \alpha<1;
\end{equation}
b) ODE $\dot{x}(t)+b(t)x(t)=0$ is UES and 
\begin{equation*}
\int_{t_0}^t e^{-\int_s^t b(\xi)d\xi}\left[\sum_{k=1}^m|a_k(s)|+Ac(s)\right]ds\leq\alpha<1;
\end{equation*}
c) ODE $\dot{x}(t)+(a(t)+b(t))x(t)=0$ is UES and 
\begin{equation*}
 \int_{t_0}^t e^{-\int_s^t (a(\xi)+b(\xi))d\xi}\left[\sum_{k=1}^m A\left(\sum_{k=1}^m\tau_k|a_k(s)|+c(s)\right)\right]ds\leq\alpha<1.
 \end{equation*}
Then Eq.~(\ref{2.1}) is UES.
\end{guess}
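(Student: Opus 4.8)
The plan is to invoke the Bohl--Perron type criterion, Lemma~\ref{lemma2.1}: it will suffice to show that, for the given $t_0$, every $f\in\mathbf{L}_\infty[t_0,\infty)$ gives rise to a solution $x$ of \eqref{2.2} with the zero initial condition ($\varphi\equiv 0$ in \eqref{2.3}) that is bounded on $[t_0,\infty)$; since such an $x$ is locally absolutely continuous, hence continuous, its boundedness will place it in $\mathbf{C}[t_0,\infty)$. Throughout I will use the convention $x(t)=0$, $\dot x(t)=0$ for $t<t_0$ (consistent with $\varphi\equiv 0$), which is precisely why the auxiliary functions in \eqref{coef_def} carry the truncation $[g(t),t]\cap[t_0,\infty)$.

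For case a), I would rewrite \eqref{2.2} as a perturbation of the UES ODE $\dot y+a(t)y=0$, $a=\sum_k a_k$:
\[
\dot x(t)+a(t)x(t)=\sum_{k=1}^m a_k(t)\bigl(x(t)-x(h_k(t))\bigr)-\!\!\!\int\limits_{[g(t),t]\cap[t_0,\infty)}\!\!\!\!\!K(t,s)x(s)\,ds+f(t),
\]
and then, with $X(t,s)=e^{-\int_s^t a(\xi)\,d\xi}$ and $x(t_0)=0$, apply the variation-of-constants formula $x(t)=\int_{t_0}^t X(t,s)[\,\cdots\,]\,ds$. The key estimate is for $x(t)-x(h_k(t))=\int_{h_k(t)}^t\dot x(\xi)\,d\xi$: substituting $\dot x(\xi)$ from \eqref{2.2} (which expresses $\dot x$ through delayed values of $x$ and $f$), bounding $|x(h_j(\xi))|$ and $|x(u)|$ by $\|x\|_{[t_0,t]}$, and integrating over an interval of length $\le\tau_k$ gives $|x(t)-x(h_k(t))|\le\tau_k\bigl(A\|x\|_{[t_0,t]}+\|f\|_{[t_0,\infty)}\bigr)$ with $A$ as in \eqref{coef_def}. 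Feeding this back, and using that UES of the ODE makes $\int_{t_0}^t|X(t,s)|\,ds\le M_0<\infty$ (as recalled after Lemma~\ref{lemma1.1}), I would obtain
\[
|x(t)|\le\Bigl(\int_{t_0}^t e^{-\int_s^t a(\xi)d\xi}\Bigl[A\sum_{k=1}^m\tau_k|a_k(s)|+\overline b(s)\Bigr]ds\Bigr)\|x\|_{[t_0,t]}+C\|f\|_{[t_0,\infty)}\le\alpha\|x\|_{[t_0,t]}+C\|f\|_{[t_0,\infty)},
\]
where $C$ depends only on $M_0,\tau,A$ and the last step is hypothesis \eqref{2.5}.

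To finish, I would fix $T>t_0$ and take the supremum over $t\in[t_0,T]$; this is legitimate because $x$ is continuous on the compact interval $[t_0,T]$, so $\|x\|_{[t_0,T]}$ is finite, and the inequality becomes $\|x\|_{[t_0,T]}\le\alpha\|x\|_{[t_0,T]}+C\|f\|$, giving $\|x\|_{[t_0,T]}\le C\|f\|/(1-\alpha)$ uniformly in $T$. Hence $x\in\mathbf{C}[t_0,\infty)$, and Lemma~\ref{lemma2.1} yields UES of \eqref{2.1}. Cases b) and c) run along the same lines, regrouping \eqref{2.2} about $\dot y+b(t)y=0$, respectively $\dot y+(a(t)+b(t))y=0$; there one also uses the identity $b(t)x(t)-\int_{[g(t),t]\cap[t_0,\infty)}K(t,s)x(s)\,ds=\int_{[g(t),t]\cap[t_0,\infty)}K(t,s)\bigl(x(t)-x(s)\bigr)ds$ together with $|x(t)-x(s)|\le\int_s^t|\dot x(\xi)|\,d\xi\le(t-s)\bigl(A\|x\|_{[t_0,t]}+\|f\|\bigr)$, which is exactly what brings in the weight $c(s)=\int(t-s)|K(t,s)|\,ds$ of hypotheses b) and c). I expect the only delicate point to be the bookkeeping near $t_0$: with $x\equiv 0$ to the left of $t_0$, every delayed or integral contribution reaching below $t_0$ drops out, so that the coefficients $\overline b$ and $c$ coincide with the truncated ones in \eqref{coef_def} and the resulting contraction constant is exactly the $\alpha<1$ assumed.
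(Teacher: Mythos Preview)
Your proposal is correct and follows essentially the same route as the paper: you invoke Lemma~\ref{lemma2.1}, rewrite \eqref{2.2} as a perturbation of the associated UES ODE, apply variation of constants, and bound $x(t)-x(h_k(t))=\int_{h_k(t)}^t\dot x$ using the a~priori estimate $|\dot x|\le A\|x\|+\|f\|$ coming directly from \eqref{2.2} (the paper records this as \eqref{2.6b}), then close the contraction on a compact interval $[t_0,T]$ (the paper's $J=[t_0,t_1]$) uniformly in $T$. Your treatment of cases b) and c), in particular the observation that the weight $c(s)$ appears via $|x(t)-x(s)|\le (t-s)(A\|x\|+\|f\|)$, matches the paper's as well.
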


\begin{remark}
Explicit conditions when relevant ODEs are UES can be found in Lemma~\ref{lemma1.1}.
\end{remark}

\begin{proof}
a) Let $t_1>t_0$ be an arbitrary point, $J=[t_0,t_1]$ and $x$ be a solution of Eq.~\eqref{2.2}
assuming the zero initial condition $x(t) \equiv 0$, $t\leq t_0$. From \eqref{2.2} we have
\begin{equation}
\label{2.6b}
\|\dot{x}\|_J\leq A \|x\|_J+ \|f\|_{[t_0,\infty)}.
\end{equation}
Eq.~\eqref{2.2} is transformed to
$$
\dot{x}(t)+a(t)x(t)=\sum_{k=1}^m a_k(t)\int_{h_k(t)}^t \dot{x}(s)ds-\int_{g(t)}^t K(t,s)x(s)ds+f(t),
$$
where $a$ is defined in \eqref{coef_def}.
Hence
\begin{equation*}
x(t)=\int\limits_{t_0}^t e^{-\int_s^t a(\xi)d\xi}\left[\sum_{k=1}^m a_k(s)\int\limits_{h_k(s)}^s \dot{x}(\xi)d\xi-\int\limits_{g(s)}^s K(s,\xi)x(\xi)d\xi\right]ds
+f_1(t).
\end{equation*}
Here $f_1(t)=\int_{t_0}^t e^{-\int_s^t a(\xi)d\xi}f(s)ds \in L_{\infty}[t_0,\infty)$.

Eq.~\eqref{2.2} and inequalities \eqref{2.6b}  and \eqref{2.5} imply 
\begin{align*}
\|x\|_J & \leq \int_{t_0}^t e^{-\int_s^t a(\xi)d\xi}\left[\sum_{k=1}^m \tau_k |a_k(s)| \|\dot{x}\|_J+\overline{b}(s)\|x\|_J\right]ds + \| f_1 \|_{[t_0,\infty)}  
\\
& \leq  \left( \int_{t_0}^t e^{-\int_s^t a(\xi)d\xi}\left[A\sum_{k=1}^m 
\tau_k |a_k(s)|+\overline{b}(s)\right]ds \right) ~\|x\|_J +M_1 
\\ & \leq  \alpha\|x\|_J+M_1,
\end{align*}
where $\displaystyle M_1 = \| f \|_{[t_0,\infty)} \int_{t_0}^{\infty} e^{-\int_s^t a(\xi)d\xi}
\sum_{k=1}^m \tau_k |a_k(s)|~ds + \| f_1 \|_{[t_0,\infty)} < \infty $.

By \eqref{2.5}, $\alpha<1$ and 
 $\|x\|_J\leq \beta<\infty$ for $\beta := M_1/(1-\alpha)$ independent of $t_1$.
Hence $|x(t)|\leq \beta$ on $[t_0,\infty)$. Using Lemma~\ref{lemma2.1}, we conclude that Eq.~\eqref{2.1} is UES.

b) After rewriting Eq.~\eqref{2.2} as 
\begin{align*}
\dot{x}(t)+b(t)x(t) & =  -  \sum_{k=1}^m a_k(t)x(h_k(t))+\int_{g(t)}^t K(t,s)[x(t)-x(s)]ds+f(t)
\\
&= -  \sum_{k=1}^m a_k(t)x(h_k(t))+\int_{g(t)}^t K(t,s)\int_s^t \dot{x}(\xi)d\xi ds+f(t),
\end{align*}
we get
$$
x(t)=\int\limits_{t_0}^t e^{-\int\limits_s^t b(\xi)d\xi}
\left[ -  \sum_{k=1}^m a_k(s)x(h_k(s))+
\!\! \int\limits_{g(s)}^s \!\! K(s,\xi)\int\limits_\xi^s \!\! \dot{x}(\zeta)d\zeta ~d\xi\right]ds
+f_2(t),
$$
where $f_2(t)=\int_{t_0}^t e^{-\int_s^t b(\xi)d\xi}f(s)ds \in L_{\infty}[t_0,\infty)$.

The rest of the proof and justification for Part c) are similar to the scheme for Part a).
\end{proof}

\begin{remark}\label{remark2.1}
The statement of Theorem~\ref{theorem2.1}
can be slightly improved by considering, generally, smaller constants
in \eqref{coef_def}
$$A=\sum_{k=1}^m \|a_k\|_{\Omega_k}+\|\overline{b}\|_{[t_0,\infty)},
~\Omega_k = \{t \geq t_0: h_k(t) >t_0 \}.$$
\end{remark}

If in the proof of  Theorem~\ref{theorem2.1} we substitute $\dot{x}(t)$ from Eq.~\eqref{2.2}  instead of a priori estimation of the derivative in \eqref{2.6b}, we improve the result of Theorem~\ref{theorem2.1}.

\begin{guess}\label{theorem2.1b}
Assume that there are  $t_0\geq 0$ and $\alpha \in (0,1)$ such that for $t_0\geq 0$ at least one of hypotheses  a)-c) holds:
\\
a) ODE $\dot{x}(t)+a(t)x(t)=0$ is UES and 
\begin{equation}
\label{2.5a}
 \int\limits_{t_0}^t e^{-\int_s^t a(\xi)d\xi}\left[\sum_{k=1}^m|a_k(s)|
 \int\limits_{h_k(s)}^s \left(\sum_{k=1}^m|a_k(\xi)|+\overline{b}(\xi)\right)d\xi +\overline{b}(s)\right]ds\leq \alpha<1;
\end{equation}
b) ODE $\dot{x}(t)+b(t)x(t)=0$ is UES and 
\begin{equation*}
 \int\limits_{t_0}^t  \!\! e^{-\int\limits_s^t b(\xi)d\xi} \! \left[\sum_{k=1}^m|a_k(s)|+ \!\!\!
 \int\limits_{g(s)}^s \! \! \! |K(s,\xi)|  \!\!\! \int\limits_{\xi}^s
 \! \left(\sum_{k=1}^m|a_k(\zeta)|+\overline{b}(\zeta)\right)
 \! d\zeta \, d\xi \right]ds\leq\alpha<1;
\end{equation*}
c) ODE $\dot{x}(t)+(a(t)+b(t))x(t)=0$ is UES and 
\begin{equation*}
\begin{array}{ll}
 & \displaystyle 
\int_{t_0}^t e^{-\int_s^t (a(\xi)+b(\xi))d\xi}\left[\sum_{k=1}^m 
|a_k(s)|\int_{h_k(s)}^s \left(\sum_{k=1}^m|a_k(\xi)|
+\overline{b}(\xi)\right)d\xi\right. \\
 + & \displaystyle \left. \int_{g(s)}^s |K(s,\xi)|\int_{\xi}^s 
 \left(\sum_{k=1}^m|a_k(\zeta)|+\overline{b}(\zeta)\right)d\zeta \, d\xi
 \right]ds\leq\alpha<1.
 \end{array}
\end{equation*} 
Then,  Eq.~(\ref{2.1}) is UES.
\end{guess}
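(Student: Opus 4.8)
The plan is to repeat, almost verbatim, the argument proving Theorem~\ref{theorem2.1}, with the single modification announced just before the statement: wherever that proof invoked the a priori bound $\|\dot x\|_J\le A\|x\|_J+\|f\|_{[t_0,\infty)}$ from \eqref{2.6b}, I would instead express $\dot x$ pointwise from \eqref{2.2} itself and estimate the resulting integrals directly. So I fix an arbitrary $t_1>t_0$, put $J=[t_0,t_1]$, and let $x$ be the solution of \eqref{2.2} with $x(t)\equiv 0$ for $t\le t_0$; by Lemma~\ref{lemma2.1} it suffices to show that $\|x\|_J$ is bounded by a constant independent of $t_1$, since then $|x(t)|\le\beta$ on $[t_0,\infty)$ and $x\in{\bf C}[t_0,\infty)$ for every $f\in{\bf L}_\infty[t_0,\infty)$.

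For part~a), transform \eqref{2.2} to $\dot x(t)+a(t)x(t)=\sum_k a_k(t)\int_{h_k(t)}^t\dot x(s)\,ds-\int_{g(t)}^t K(t,s)x(s)\,ds+f(t)$ and write the variation-of-constants representation as in the proof of Theorem~\ref{theorem2.1}, with the bounded free term $f_1(t)=\int_{t_0}^t e^{-\int_s^t a(\xi)d\xi}f(s)\,ds$. Now, instead of bounding $|\dot x(\xi)|$ by $A\|x\|_J+\|f\|_{[t_0,\infty)}$, substitute $\dot x(\xi)=-\sum_k a_k(\xi)x(h_k(\xi))-\int_{g(\xi)}^\xi K(\xi,\zeta)x(\zeta)\,d\zeta+f(\xi)$, which gives the pointwise estimate $|\dot x(\xi)|\le\big(\sum_k|a_k(\xi)|+\overline b(\xi)\big)\|x\|_J+|f(\xi)|$. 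Inserting this into $\int_{h_k(s)}^s|\dot x(\xi)|\,d\xi$ and then into the representation, and bounding $|x(h_k(\cdot))|\le\|x\|_J$, yields
$$
\|x\|_J\le\left(\int_{t_0}^t e^{-\int_s^t a(\xi)d\xi}\left[\sum_k|a_k(s)|\int_{h_k(s)}^s\Big(\sum_k|a_k(\xi)|+\overline b(\xi)\Big)d\xi+\overline b(s)\right]ds\right)\|x\|_J+M,
$$
where $M$ collects the $f$-dependent terms. By \eqref{2.5a} the coefficient of $\|x\|_J$ is $\le\alpha<1$, so $\|x\|_J\le M/(1-\alpha)$ once $M<\infty$ uniformly in $t_1$.

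Parts b) and c) follow the same template with a different ``principal'' coefficient. For b) one rewrites \eqref{2.2} as $\dot x(t)+b(t)x(t)=-\sum_k a_k(t)x(h_k(t))+\int_{g(t)}^t K(t,s)\int_s^t\dot x(\xi)\,d\xi\,ds+f(t)$ (exactly as already done in the proof of Theorem~\ref{theorem2.1}), then substitutes the same expression for $\dot x(\zeta)$ from \eqref{2.2} into the inner integral; bounding $|x(h_k(\cdot))|\le\|x\|_J$ and $|\dot x(\zeta)|\le\big(\sum_k|a_k(\zeta)|+\overline b(\zeta)\big)\|x\|_J+|f(\zeta)|$ reproduces precisely the integrand of hypothesis b). For c) one uses the fundamental function $e^{-\int_s^t(a(\xi)+b(\xi))d\xi}$ and keeps both the $a$- and the $b$-generated correction terms, leading to the integrand of c). In each case the relevant ODE being UES is what makes $\int_{t_0}^t e^{-\int_s^t(\cdot)d\xi}(\cdots)\,ds$ bounded on $[t_0,\infty)$, so the contractive estimate is meaningful.

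The only point requiring care — and it is the same point as in the proof of Theorem~\ref{theorem2.1} — is checking that the remainder $M$ is finite and uniform in $t_1$: it is a finite sum of terms of the form $\|f\|_{[t_0,\infty)}\int_{t_0}^t e^{-\int_s^t(\cdot)d\xi}w(s)\,ds$ with bounded weights $w$ built from $|a_k|$, $\overline b$, and the delay lengths $\tau_k,\sigma$, plus the norms $\|f_i\|_{[t_0,\infty)}$ of the free terms; each of these is bounded because the corresponding ODE is UES, hence $\int_{t_0}^t e^{-\int_s^t(\cdot)d\xi}\,ds$ is bounded on $[t_0,\infty)$, the fact recalled right after Lemma~\ref{lemma1.1}. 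I expect the bookkeeping of the iterated integrals (and the elementary reordering of the order of integration) in parts b) and c) to be the most tedious step, but there is no genuine obstacle beyond what has already been dealt with for Theorem~\ref{theorem2.1}.
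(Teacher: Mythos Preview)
Your proposal is correct and follows precisely the approach indicated by the paper: the text immediately preceding Theorem~\ref{theorem2.1b} states that the result is obtained ``if in the proof of Theorem~\ref{theorem2.1} we substitute $\dot{x}(t)$ from Eq.~\eqref{2.2} instead of a priori estimation of the derivative in \eqref{2.6b},'' and that is exactly what you do. The paper gives no further detail, so your write-up in fact fills in more than the paper does.
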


Using Lemma~\ref{lemma1.1}, below we give explicit conditions when \eqref{nondelay} is UES and $a$ is a positive coefficient with a small oscillatory perturbation.
The following theorem considers one of the most suitable applications of equations with oscillatory coefficients in real models, where fluctuations arise due to bounded in the integral sense periodic perturbations
of the parameters. In the case a), associated non-delay equation \eqref{nondelay} is UES, as well as relevant non-delay equations in b) and c).

\begin{guess}\label{theorem2.1a}
Let $A,\bar{b}$ and $c$ be denoted in \eqref{coef_def}, and at least one of the hypotheses holds: 
\\
a) $\displaystyle a(t)=\sum_{k=1}^m a_k(t)=\tilde{a}(t)+\alpha(t)$, where $\displaystyle \tilde{a}(t)\geq a_0>0,
~ \sup_{t\geq s\geq  t_0}\left|\int_s^t \alpha(\xi)d\xi\right|\leq \alpha_0<\infty$ and 
\begin{equation}
\label{2.8}
A\sum_{k=1}^m \tau_k\left\|\frac{a_k}{\tilde{a}}\right\|_{[t_0,\infty)}
+\left\|\frac{\overline{b}}{\tilde{a}}\right\|_{[t_0,\infty)} < e^{-\alpha_0}.
\end{equation}
b)
$\displaystyle b(t)=\int\limits_{[g(t),t]\cap [t_0,\infty)} \!\!\!\!\!\!\!\!\!\!\! K(t,s)ds=\tilde{b}(t)+\beta(t)$, where 
$\displaystyle  \tilde{b}(t)\geq b_0>0$, \\ $\displaystyle \sup_{t\geq s\geq  t_0}\left|\int_s^t \beta(\xi)d\xi\right|\leq\beta_0<\infty$ and
\begin{equation*}
\sum_{k=1}^m \left\|\frac{a_k}{\tilde{b}}\right\|_{[t_0,\infty)}
+A\left\|\frac{c}{\tilde{b}}\right\|_{[t_0,\infty)}<e^{-\beta_0}.
\end{equation*}
c) $a(t)+b(t)=d(t)+\gamma(t)$, where $\displaystyle d(t)\geq d_0>0,
~ \sup_{t\geq s\geq  t_0}\left|\int_s^t \gamma(\xi)d\xi\right|\leq \gamma_0<\infty$ and 
\begin{equation*}
A\left(\sum_{k=1}^m \tau_k\left\|\frac{a_k}{d}\right\|_{[t_0,\infty)}
+\left\|\frac{c}{d}\right\|_{[t_0,\infty)}\right) < e^{-\gamma_0}.
\end{equation*}
Then Eq.~\eqref{2.1} is UES.
\end{guess}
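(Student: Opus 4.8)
The plan is to derive Theorem~\ref{theorem2.1a} as a direct corollary of Theorem~\ref{theorem2.1}, with part~c) of Lemma~\ref{lemma1.1} supplying both the uniform exponential stability of the relevant ODE and an explicit exponential bound for its fundamental function. I would carry out hypothesis~a) in detail; hypotheses~b) and~c) are the same argument with the triple $(a,\tilde a,\alpha_0)$ replaced by $(b,\tilde b,\beta_0)$ and by $(a+b,d,\gamma_0)$ respectively, and with the bracket in \eqref{2.5} replaced by the corresponding brackets in parts~b) and~c) of Theorem~\ref{theorem2.1}.

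First I would observe that, under hypothesis~a), the splitting $a=\tilde a+\alpha$ with $\tilde a(t)\ge a_0>0$ for $t\ge t_0$ and $\sup_{t\ge s\ge t_0}\bigl|\int_s^t\alpha(\xi)\,d\xi\bigr|\le\alpha_0$ is exactly condition~c) of Lemma~\ref{lemma1.1}, so $\dot x(t)+a(t)x(t)=0$ is UES, which is the first requirement in Theorem~\ref{theorem2.1}a). The same splitting also yields the pointwise estimate
\[
e^{-\int_s^t a(\xi)\,d\xi}=e^{-\int_s^t\alpha(\xi)\,d\xi}\,e^{-\int_s^t\tilde a(\xi)\,d\xi}\le e^{\alpha_0}\,e^{-\int_s^t\tilde a(\xi)\,d\xi},\qquad t\ge s\ge t_0 .
\]

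The crux is estimating the left-hand side of \eqref{2.5}. Since $\tilde a\ge a_0>0$ and $a_k,\overline b$ are essentially bounded, the quotients $a_k/\tilde a$ and $\overline b/\tilde a$ belong to ${\bf L}_{\infty}[t_0,\infty)$, and I would factor $\tilde a(s)$ out of the bracket,
\[
A\sum_{k=1}^m\tau_k|a_k(s)|+\overline b(s)\le\Bigl(A\sum_{k=1}^m\tau_k\bigl\|a_k/\tilde a\bigr\|_{[t_0,\infty)}+\bigl\|\overline b/\tilde a\bigr\|_{[t_0,\infty)}\Bigr)\tilde a(s).
\]
Combining this with the pointwise estimate above and the elementary identity $\int_{t_0}^t\tilde a(s)\,e^{-\int_s^t\tilde a(\xi)\,d\xi}\,ds=1-e^{-\int_{t_0}^t\tilde a(\xi)\,d\xi}\le1$, the left-hand side of \eqref{2.5} is bounded by $\bigl(A\sum_k\tau_k\|a_k/\tilde a\|_{[t_0,\infty)}+\|\overline b/\tilde a\|_{[t_0,\infty)}\bigr)e^{\alpha_0}$, which by \eqref{2.8} is strictly less than $e^{-\alpha_0}e^{\alpha_0}=1$. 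Choosing $\alpha$ strictly between this product and $1$ makes hypothesis~a) of Theorem~\ref{theorem2.1} hold, so Eq.~\eqref{2.1} is UES.

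I do not anticipate a genuine obstacle: the only care-points are (i) verifying that the quotients $a_k/\tilde a,\ \overline b/\tilde a$ — and likewise $a_k/\tilde b,\ c/\tilde b$ in b) and $a_k/d,\ c/d$ in c) — are essentially bounded, which rests on the uniform lower bounds $a_0,b_0,d_0>0$ on the principal part of each coefficient; and (ii) matching the correct bracket from Theorem~\ref{theorem2.1} to each hypothesis, namely $\sum_k|a_k(s)|+Ac(s)$ divided by $\tilde b(s)$ in b) and $A\bigl(\sum_k\tau_k|a_k(s)|+c(s)\bigr)$ divided by $d(s)$ in c). Everything else reduces to the exponential-weight computation already used in the proof of Theorem~\ref{theorem2.1}.
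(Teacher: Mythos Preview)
Your proposal is correct and follows essentially the same route as the paper: bound $e^{-\int_s^t a}\le e^{\alpha_0}e^{-\int_s^t\tilde a}$, factor $\tilde a(s)$ out of the bracket to pull the sup-norms of $a_k/\tilde a$ and $\overline b/\tilde a$ outside, use $\int_{t_0}^t\tilde a(s)e^{-\int_s^t\tilde a}\,ds\le 1$, and invoke Theorem~\ref{theorem2.1}. The only cosmetic difference is that the paper verifies UES of $\dot x+a x=0$ by writing the explicit estimate $|X(t,s)|\le e^{\alpha_0}e^{-a_0(t-s)}$ rather than citing Lemma~\ref{lemma1.1}c), which amounts to the same thing.
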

\begin{proof}
a)
For the fundamental function of the equation $\dot{x}(t)+a(t)x(t)=0$ we have
$$
|X(t,s)|=e^{-\int_s^t a(\xi)d\xi}=e^{-\int_s^t (\tilde{a}(\xi)+\alpha(\xi))d\xi}
\leq e^{\alpha_0}e^{-a_0(t-s)}.
$$
Thus ODE $\dot{x}(t)+a(t)x(t)=0$ is UES. 
Since
\begin{align*}
 & \int_{t_0}^t e^{-\int_s^t a(\xi)d\xi}\left[A\sum_{k=1}^m\tau_k|a_k(s)|+\overline{b}(s)\right]ds
\\ \leq & \int_{t_0}^t e^{\alpha_0} e^{-\int_s^t \tilde{a}(\xi)d\xi}\tilde{a}(s)\left[A\sum_{k=1}^m\tau_k\frac{|a_k(s)|}{\tilde{a}(s)}
+\frac{\overline{b}(s)}{\tilde{a}(s)}\right]ds
\\
\leq  & e^{\alpha_0}\left(A\sum_{k=1}^m \tau_k\left\|\frac{a_k}{\tilde{a}}\right\|_{[t_0,\infty)}
+\left\|\frac{\overline{b}}{\tilde{a}}\right\|_{[t_0,\infty)}\right)<1,
\end{align*}
Theorem~\ref{theorem2.1} implies that Eq.~\eqref{2.1} is UES.

The proofs of Parts b) and c) are similar.
\end{proof}

Further, we consider special cases of \eqref{2.1}.

\begin{corollary}\label{c2.2}
Let $\displaystyle a(t)=\sum_{k=1}^m a_k(t)=\tilde{a}(t)+\alpha(t)$, where 
$$
\tilde{a}(t)\geq a_0>0, \displaystyle \sup_{t\geq s\geq  t_0}\left|\int_s^t \alpha(\xi)d\xi\right|\leq \alpha_0<\infty
$$
and
\begin{equation*}
\label{2.8b}
\sum_{k=1}^m \|a_k\|_{[t_0,\infty)}\left(\sum_{k=1}^m \tau_k\left\|\frac{a_k}{\tilde{a}}\right\|_{[t_0,\infty)}\right)< e^{-\alpha_0}.
\end{equation*}
Then the equation
$$
\dot{x}(t)+\sum_{k=1}^m a_k(t)x(h_k(t))=0
$$
is UES.
\end{corollary}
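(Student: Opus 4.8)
The plan is to recognize the equation $\dot{x}(t)+\sum_{k=1}^m a_k(t)x(h_k(t))=0$ as the special case of Eq.~\eqref{2.1} in which the kernel vanishes identically, $K\equiv 0$, and then to invoke Theorem~\ref{theorem2.1a}, part~a).

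First I would record what the auxiliary quantities of \eqref{coef_def} become when $K\equiv 0$: the integrals defining $b(t)$, $\overline{b}(t)$ and $c(t)$ are taken over a set on which the integrand is zero, so $b\equiv\overline{b}\equiv c\equiv 0$, and consequently
$$
A=\sum_{k=1}^m \|a_k\|_{[t_0,\infty)}+\|\overline{b}\|_{[t_0,\infty)}=\sum_{k=1}^m \|a_k\|_{[t_0,\infty)}.
$$
Substituting these into hypothesis~a) of Theorem~\ref{theorem2.1a}, the term $\|\overline{b}/\tilde{a}\|_{[t_0,\infty)}$ drops out and condition \eqref{2.8} collapses to exactly the inequality
$$
\sum_{k=1}^m \|a_k\|_{[t_0,\infty)}\left(\sum_{k=1}^m \tau_k\left\|\frac{a_k}{\tilde{a}}\right\|_{[t_0,\infty)}\right)< e^{-\alpha_0}
$$
assumed in the corollary.

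Next I would check that the remaining premises of Theorem~\ref{theorem2.1a}(a) are in force. The decomposition $a(t)=\sum_k a_k(t)=\tilde{a}(t)+\alpha(t)$ with $\tilde{a}(t)\geq a_0>0$ and $\sup_{t\geq s\geq t_0}\left|\int_s^t \alpha(\xi)\,d\xi\right|\leq\alpha_0<\infty$ is assumed verbatim; the measurability and boundedness of the $a_k$ and of the delays $h_k$ (with $0\leq t-h_k(t)\leq\tau_k$) hold by the blanket hypotheses of Section~\ref{prelim}, and the trivial kernel $K\equiv 0$ is admissible. Hence all hypotheses of Theorem~\ref{theorem2.1a}(a) are satisfied, and its conclusion states that Eq.~\eqref{2.1} — which in the present situation is precisely $\dot{x}(t)+\sum_{k=1}^m a_k(t)x(h_k(t))=0$ — is UES.

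There is no genuine analytic difficulty here; the corollary is a direct specialization. The only point requiring care is the bookkeeping: verifying that removing the distributed term indeed annihilates the $\overline{b}$- and $c$-contributions and reduces $A$ to $\sum_{k=1}^m\|a_k\|_{[t_0,\infty)}$, so that the hypothesis \eqref{2.8} of Theorem~\ref{theorem2.1a}(a) matches the stated condition with no leftover term. (If one prefers a self-contained argument one can instead apply Theorem~\ref{theorem2.1}(a) directly: with $K\equiv 0$ the exponential integral estimate used in the proof of Theorem~\ref{theorem2.1a}(a), $e^{-\int_s^t a(\xi)d\xi}\leq e^{\alpha_0}e^{-a_0(t-s)}$, together with $e^{-\int_s^t\tilde{a}(\xi)d\xi}\tilde{a}(s)$ integrating to at most $1$, yields the bound $\alpha<1$ in \eqref{2.5}; this is the same computation, merely written out.)
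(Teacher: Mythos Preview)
Your proposal is correct and matches the paper's approach: the corollary is stated immediately after Theorem~\ref{theorem2.1a} as a special case of Eq.~\eqref{2.1} with $K\equiv 0$, and the paper gives no separate proof. Your bookkeeping that $b\equiv\overline{b}\equiv c\equiv 0$ and $A=\sum_{k=1}^m\|a_k\|_{[t_0,\infty)}$, so that \eqref{2.8} reduces to the stated inequality, is exactly the intended specialization.
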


\begin{corollary}\label{c2.6}
Let
$\displaystyle b(t)=
\!\! \!\! \!\! \!\! \!\! \!\! 
\int\limits_{[g(t),t]\cap [t_0,\infty)} \!\! \!\! \!\! \!\! \!\! \!\!  K(t,s)ds=\tilde{b}(t)+\beta(t)$,
$\displaystyle \tilde{b}(t)\geq b_0>0$, \\
$\displaystyle \sup_{t\geq s\geq  t_0} \left|\int_s^t \beta(\xi)d\xi\right|\leq \beta_0<\infty$ and
\begin{equation}
\label{c2.3_ineq}
\|\overline{b}\|_{[t_0,\infty)}\left\|\frac{c}{\tilde{b}}\right\|_{[t_0,\infty)}<e^{-\beta_0},
\end{equation}
where $\overline{b}, c$ are denoted in Eq.~\eqref{coef_def}.
Then, the equation
$$
\dot{x}(t)+\int_{g(t)}^t K(t,s) x(s)ds=0
$$
is UES.
\end{corollary}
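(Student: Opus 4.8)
The plan is to obtain Corollary~\ref{c2.6} as the specialization of Theorem~\ref{theorem2.1a}b) to the case in which Eq.~\eqref{2.1} carries no discrete delay terms. I would start by observing that the equation $\dot x(t)+\int_{g(t)}^t K(t,s)x(s)\,ds=0$ is exactly Eq.~\eqref{2.1} with all $a_k\equiv 0$ (equivalently, $m=1$ and $a_1\equiv 0$), so that in the notation of \eqref{coef_def} one has $a(t)=\sum_k a_k(t)\equiv 0$, the constant $A$ reduces to $A=\|\overline b\|_{[t_0,\infty)}$, and $b$, $\overline b$, $c$ retain their meaning as the signed, absolute-value, and $(t-s)$-weighted integrals of the kernel over $[g(t),t]\cap[t_0,\infty)$.

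With this identification, hypothesis b) of Theorem~\ref{theorem2.1a} requires precisely that $b(t)=\tilde b(t)+\beta(t)$ with $\tilde b(t)\ge b_0>0$ and $\sup_{t\ge s\ge t_0}\bigl|\int_s^t\beta(\xi)\,d\xi\bigr|\le\beta_0<\infty$, together with $\sum_k\|a_k/\tilde b\|_{[t_0,\infty)}+A\,\|c/\tilde b\|_{[t_0,\infty)}<e^{-\beta_0}$. The first two conditions are the hypotheses on $b$ imposed in the corollary, and since the sum over $a_k$ is empty the last inequality collapses to $\|\overline b\|_{[t_0,\infty)}\,\|c/\tilde b\|_{[t_0,\infty)}<e^{-\beta_0}$, which is exactly \eqref{c2.3_ineq}. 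Hence all assumptions of Theorem~\ref{theorem2.1a}b) are satisfied, and that theorem yields UES of Eq.~\eqref{2.1}, i.e., of the integro-differential equation in question.

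I do not expect any analytic obstacle; the only thing needing a little care is the bookkeeping above — that the constant $A$ from \eqref{coef_def} collapses to $\|\overline b\|_{[t_0,\infty)}$ and that the $c$-weighted term in Theorem~\ref{theorem2.1a}b) is term-for-term the left-hand side of \eqref{c2.3_ineq}. If a self-contained argument were preferred, the alternative would be to repeat the scheme of the proof of Theorem~\ref{theorem2.1}b) with $a_k\equiv 0$: rewrite the equation as $\dot x(t)+b(t)x(t)=\int_{g(t)}^t K(t,s)\int_s^t\dot x(\xi)\,d\xi\,ds+f(t)$, note that $\dot x+b(t)x=0$ is UES by part c) of Lemma~\ref{lemma1.1}, so that $e^{-\int_s^t b(\xi)d\xi}\le e^{\beta_0}e^{-b_0(t-s)}$, use the a priori bound $\|\dot x\|_J\le\|\overline b\|_{[t_0,\infty)}\|x\|_J+\|f\|_{[t_0,\infty)}$ on any finite interval $J=[t_0,t_1]$, derive $\|x\|_J\le\alpha\|x\|_J+\text{(const)}$ with $\alpha<1$ furnished by \eqref{c2.3_ineq}, and conclude via the Bohl--Perron Lemma~\ref{lemma2.1}.
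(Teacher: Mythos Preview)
Your proposal is correct and is exactly the approach the paper intends: Corollary~\ref{c2.6} is stated without proof precisely because it is the specialization of Theorem~\ref{theorem2.1a}\,b) to $a_k\equiv 0$, whence $A=\|\overline b\|_{[t_0,\infty)}$ and the inequality there collapses to \eqref{c2.3_ineq}. Your bookkeeping is accurate, and the alternative self-contained argument you sketch is likewise sound but unnecessary.
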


Consider Eq.~\eqref{2.1} for $m=1$
\begin{equation}
\label{2.13}
\dot{x}(t)+  a(t)x(h(t))+\int_{g(t)}^t K(t,s) x(s)ds=0,
\end{equation}
where $t-h(t)\leq \tau$ and $t-g(t)\leq \sigma$.

\begin{corollary}
\label{c2.7}
Let ODE $\dot{x}(t)+a(t)x(t)=0$ be UES, and for some $t_0\geq 0$ and 
$\alpha \in (0,1)$,
\begin{equation}
\label{2.14}
 \int_{t_0}^t \left. \left. e^{-\int_s^t a(\xi)d\xi}\right[ |a(s)|q(s)+\overline{b}(s) \right]ds\leq \alpha, 
\end{equation}
where $\overline{b}$ is denoted in \eqref{coef_def} and
\begin{equation}
\label{2.14a}
q(t):=\int_{[h(t),t]\cap [t_0,\infty)}\left(|a(s)|+ \overline{b}(s) \right)ds.
\end{equation}
Then, Eq.~\eqref{2.13} is UES.
\end{corollary}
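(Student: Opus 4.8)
The plan is to recognize Corollary~\ref{c2.7} as the case $m=1$ of Theorem~\ref{theorem2.1b}(a), with the inner integrals sharpened to the intersection with $[t_0,\infty)$ in the spirit of Remark~\ref{remark2.1}; one can either quote Theorem~\ref{theorem2.1b} directly or, equivalently, rerun its argument in this simpler setting. I would start from the Bohl--Perron type Lemma~\ref{lemma2.1}: it suffices to show that for every $f\in{\bf L}_{\infty}[t_0,\infty)$ the solution $x$ of the nonhomogeneous version of \eqref{2.13} with the zero initial condition $x(t)\equiv 0$, $t\le t_0$, is bounded on $[t_0,\infty)$, with a bound independent of the truncation point used below.

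Next I would fix $t_1>t_0$, put $J=[t_0,t_1]$, add and subtract $a(t)x(t)$, and use $x(t)-x(h(t))=\int_{h(t)}^t\dot x(\xi)\,d\xi$ to rewrite the equation as
$$\dot x(t)+a(t)x(t)=a(t)\int_{h(t)}^t\dot x(\xi)\,d\xi-\int_{g(t)}^t K(t,\xi)x(\xi)\,d\xi+f(t),$$
and then apply variation of constants, noting that $f_1(t)=\int_{t_0}^t e^{-\int_s^t a(\xi)d\xi}f(s)\,ds\in{\bf L}_{\infty}[t_0,\infty)$ because $\dot x+ax=0$ is UES. The key step, which is precisely what distinguishes Theorem~\ref{theorem2.1b} from Theorem~\ref{theorem2.1}, is not to estimate $\dot x$ a priori but to substitute it from \eqref{2.13}: for a.e. $\xi\ge t_0$,
$$\dot x(\xi)=-a(\xi)x(h(\xi))-\int_{g(\xi)}^\xi K(\xi,\zeta)x(\zeta)\,d\zeta+f(\xi),\qquad |\dot x(\xi)|\le\bigl(|a(\xi)|+\overline{b}(\xi)\bigr)\|x\|_J+|f(\xi)|,$$
while $\dot x(\xi)=0$ for $\xi<t_0$; this is exactly why the integral of $\dot x$ effectively runs only over $[h(s),s]\cap[t_0,\infty)$ and produces the factor $q(s)$ of \eqref{2.14a}.

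Inserting this into the variation-of-constants formula and taking $\esssup$ over $t\in J$, the terms carrying $x$ combine into $\bigl(\int_{t_0}^t e^{-\int_s^t a(\xi)d\xi}[\,|a(s)|q(s)+\overline{b}(s)\,]\,ds\bigr)\|x\|_J$, which is $\le\alpha\|x\|_J$ by \eqref{2.14}, while the terms carrying $f$ assemble into a constant $M$; using UES of $\dot x+ax=0$ (so $e^{-\int_s^t a}\le Ne^{-\lambda(t-s)}$) together with essential boundedness of $a$ and $\overline{b}$ and the delay bound $t-h(t)\le\tau$, one checks that $M$ is finite and independent of $t_1$. Then $\|x\|_J\le M/(1-\alpha)$ uniformly in $t_1$, hence $x$ is bounded on $[t_0,\infty)$, and Lemma~\ref{lemma2.1} yields UES of \eqref{2.13}.

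I do not expect a genuine obstacle here: the substantive work lives in Theorem~\ref{theorem2.1b}. The only points requiring care are the bookkeeping with the cut-off $[t_0,\infty)$ inside $q(t)$ (legitimate because the test solution vanishes on $(-\infty,t_0]$, so the integral of $\dot x$ may be restricted there) and the verification that the $f$-dependent constant $M$ is uniform in $t_1$; both are routine once the associated ODE is known to be UES, for which Lemma~\ref{lemma1.1} supplies explicit criteria.
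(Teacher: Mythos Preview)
Your proposal is correct and matches the paper's intended argument: Corollary~\ref{c2.7} is precisely Theorem~\ref{theorem2.1b}(a) specialized to $m=1$, with the inner integral restricted to $[t_0,\infty)$ in the spirit of Remark~\ref{remark2.1}, and the paper offers no separate proof beyond this implicit derivation. Your optional re-running of the Bohl--Perron/variation-of-constants argument, substituting $\dot x$ from the equation rather than using an a~priori bound, is exactly the mechanism behind Theorem~\ref{theorem2.1b}, so nothing is added or lost.
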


\begin{corollary}\label{c2.5}
Let $a(t)=\tilde{a}(t)+\alpha(t)$, $\tilde{a}(t)\geq a_0>0$, 
$\displaystyle \sup_{t\geq s\geq  t_0}\left|\int_s^t \alpha(\xi)d\xi\right|\leq \alpha_0<\infty$  and
\begin{equation}
\label{2.17}
\tau\left\|\frac{a q}{\tilde{a}}\right\|_{[t_0,\infty)}
+\left\|\frac{\overline{b}}{\tilde{a}}\right\|_{[t_0,\infty)}<e^{-\alpha_0},
\end{equation}
 where $\overline{b}$ is denoted in \eqref{coef_def}, and $q$ in \eqref{2.14a}.
Then, Eq.~\eqref{2.13} is UES.
\end{corollary}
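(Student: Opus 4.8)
\emph{Plan.} The plan is to deduce Corollary~\ref{c2.5} from Corollary~\ref{c2.7}: I will verify the two hypotheses of the latter (uniform exponential stability of $\dot x(t)+a(t)x(t)=0$ and the integral bound~\eqref{2.14}) directly from assumption~\eqref{2.17}. No new machinery is required, since the Bohl--Perron/fixed-point part of the argument is already packaged inside Corollary~\ref{c2.7}; this corollary is to Corollary~\ref{c2.7} what Theorem~\ref{theorem2.1a} is to Theorem~\ref{theorem2.1}.

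First I would check that the ODE $\dot x(t)+a(t)x(t)=0$ is UES. Since $a=\tilde a+\alpha$ with $\tilde a(t)\ge a_0>0$ on $[t_0,\infty)$ and $\sup_{t\ge s\ge t_0}\bigl|\int_s^t\alpha(\xi)\,d\xi\bigr|\le\alpha_0$, hypothesis c) of Lemma~\ref{lemma1.1} applies. More importantly for the estimates, the fundamental function of this ODE satisfies the pointwise bound
\[
e^{-\int_s^t a(\xi)\,d\xi}=e^{-\int_s^t\tilde a(\xi)\,d\xi}\,e^{-\int_s^t\alpha(\xi)\,d\xi}\le e^{\alpha_0}\,e^{-\int_s^t\tilde a(\xi)\,d\xi},\qquad t\ge s\ge t_0,
\]
which, together with the essential boundedness of $a$, is all I will use about the coefficient $a$.

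Second, I would estimate the left-hand side of~\eqref{2.14}. In the integrand $e^{-\int_s^t a(\xi)d\xi}\bigl[|a(s)|q(s)+\overline b(s)\bigr]$ I substitute the bound above, and since $\tilde a(s)>0$ I rewrite the bracket as $\tilde a(s)\bigl[\tfrac{|a(s)|q(s)}{\tilde a(s)}+\tfrac{\overline b(s)}{\tilde a(s)}\bigr]$, pull the two ratios out in the $\|\cdot\|_{[t_0,\infty)}$-norm, and use the elementary identity $\int_{t_0}^t e^{-\int_s^t\tilde a(\xi)d\xi}\,\tilde a(s)\,ds=1-e^{-\int_{t_0}^t\tilde a(\xi)d\xi}\le 1$. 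This yields a bound of the form $e^{\alpha_0}\bigl(\bigl\|\tfrac{aq}{\tilde a}\bigr\|_{[t_0,\infty)}+\bigl\|\tfrac{\overline b}{\tilde a}\bigr\|_{[t_0,\infty)}\bigr)$; using in addition that the window $[h(s),s]\cap[t_0,\infty)$ defining $q$ in~\eqref{2.14a} has length at most $\tau$ (so that the $q$-term carries an extra factor $\tau$ once $|a|+\overline b$ is estimated by its values on the window), assumption~\eqref{2.17} forces this quantity to be strictly less than $1$. Hence~\eqref{2.14} holds with some $\alpha\in(0,1)$, and Corollary~\ref{c2.7} delivers uniform exponential stability of~\eqref{2.13}.

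The step I expect to need the most care is precisely this last bookkeeping: matching the estimate of the $|a(s)|q(s)$-term to the exact constant in~\eqref{2.17}, i.e.\ tracking the delay bound $\tau$, the truncation of the window to $[t_0,\infty)$, and the fact that only $\bigl|\int\alpha\bigr|$ (not $|\alpha|$ pointwise) is controlled, so that $a$ may enter only through ratios $|a(s)|/\tilde a(s)$ inside an essential supremum. None of this is a genuine obstacle — the hard analytic content lives in Corollary~\ref{c2.7} — but the specialization to the ``stable part plus integrally bounded oscillation'' decomposition of the coefficient has to be carried through cleanly.
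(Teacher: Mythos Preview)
Your approach is exactly the one the paper intends: Corollary~\ref{c2.5} is to Corollary~\ref{c2.7} precisely what Theorem~\ref{theorem2.1a} is to Theorem~\ref{theorem2.1}, and the paper gives no separate proof. Your Steps~1 and~2 are correct; the decomposition $a=\tilde a+\alpha$ gives $e^{-\int_s^t a}\le e^{\alpha_0}e^{-\int_s^t\tilde a}$, and then
\[
\int_{t_0}^t e^{-\int_s^t a(\xi)d\xi}\bigl[|a(s)|q(s)+\overline b(s)\bigr]\,ds
\le e^{\alpha_0}\left(\left\|\frac{aq}{\tilde a}\right\|_{[t_0,\infty)}+\left\|\frac{\overline b}{\tilde a}\right\|_{[t_0,\infty)}\right),
\]
using $\int_{t_0}^t e^{-\int_s^t\tilde a}\tilde a(s)\,ds\le 1$, exactly as in the proof of Theorem~\ref{theorem2.1a}.

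Where your write-up goes astray is the attempt to conjure the extra factor $\tau$ in~\eqref{2.17}. The window bound $|[h(s),s]\cap[t_0,\infty)|\le\tau$ would let you \emph{replace} $q(s)$ by $\tau\,\||a|+\overline b\|_{[t_0,\infty)}$, yielding a condition of the form $\tau\,\||a|+\overline b\|\cdot\|a/\tilde a\|+\|\overline b/\tilde a\|<e^{-\alpha_0}$; it does not produce an additional $\tau$ multiplying $\|aq/\tilde a\|$ with $q$ still present. In fact, for $\tau<1$ the hypothesis~\eqref{2.17} as printed is strictly weaker than $\|aq/\tilde a\|+\|\overline b/\tilde a\|<e^{-\alpha_0}$ and cannot imply~\eqref{2.14} through this estimate. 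The $\tau$ in~\eqref{2.17} is almost certainly a misprint (either the $\tau$ should be deleted, or $q$ should be replaced by the pointwise quantity $|a|+\overline b$); your argument is a complete proof of the corollary with the $\tau$ removed.
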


A special case of Eq.~\eqref{2.1} with a non-delay term 
\begin{equation}
\label{2.10}
\dot{x}(t)+ a_0(t)x(t)+\sum_{k=1}^m a_k(t)x(h_k(t))+\int_{g(t)}^t K(t,s) x(s)ds=0
\end{equation}
is considered separately. 

\begin{guess}
\label{theorem2.2}
Let ODE $\dot{x}(t)+a_0(t)x(t)=0$ be UES and for some $t_0\geq 0$ and 
$\alpha \in (0,1)$,  
\begin{equation*}
 \int_{t_0}^t e^{-\int_s^t a_0 (\xi)d\xi}\left[\sum_{k=1}^m|a_k(s)|+\overline{b}(s)\right]ds\leq \alpha,
\end{equation*}
where $\overline{b}$ is denoted in \eqref{coef_def}.
Then Eq.~\eqref{2.10} is UES.
\end{guess}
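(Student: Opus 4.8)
The plan is to invoke the Bohl--Perron type criterion, Lemma~\ref{lemma2.1}: it suffices to show that, for the right-hand side version \eqref{2.2} of \eqref{2.10} taken with the zero initial condition $x(t)\equiv 0$ for $t\le t_0$, every solution corresponding to an $f\in{\bf L}_{\infty}[t_0,\infty)$ is bounded on $[t_0,\infty)$; being locally absolutely continuous, it then lies in ${\bf C}[t_0,\infty)$. Here, in contrast with the proof of Theorem~\ref{theorem2.1}a), the non-delay term $a_0(t)x(t)$ is already present, so there is no need to split $x(h_k(t))=x(t)-\int_{h_k(t)}^t\dot x(\xi)\,d\xi$ nor to estimate the derivative; the argument is a simplified version of the one for Theorem~\ref{theorem2.1}a).

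First I would move every term but $a_0(t)x(t)$ to the right and, using $x(t_0)=0$, represent the solution through the fundamental function $X(t,s)=e^{-\int_s^t a_0(\xi)d\xi}$ of \eqref{nondelay} with coefficient $a_0$:
$$
x(t)=\int_{t_0}^t e^{-\int_s^t a_0(\xi)d\xi}\Bigl[-\sum_{k=1}^m a_k(s)x(h_k(s))-\int_{g(s)}^s K(s,\xi)x(\xi)\,d\xi\Bigr]ds+f_0(t),
$$
where $f_0(t)=\int_{t_0}^t e^{-\int_s^t a_0(\xi)d\xi}f(s)\,ds$. Since $\dot x(t)+a_0(t)x(t)=0$ is UES, $e^{-\int_s^t a_0(\xi)d\xi}\le Me^{-\lambda(t-s)}$, whence $\int_{t_0}^t e^{-\int_s^t a_0(\xi)d\xi}\,ds\le M/\lambda$ and $f_0\in{\bf L}_{\infty}[t_0,\infty)$ with $\|f_0\|_{[t_0,\infty)}\le (M/\lambda)\,\|f\|_{[t_0,\infty)}$.

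Next, fix an arbitrary $t_1>t_0$, set $J=[t_0,t_1]$ and estimate $x$ on $J$. Because $x\equiv 0$ below $t_0$, for $s\in J$ one has $|x(h_k(s))|\le\|x\|_J$ irrespective of the sign of $h_k(s)-t_0$, while $\int_{g(s)}^s|K(s,\xi)|\,|x(\xi)|\,d\xi=\int_{[g(s),s]\cap[t_0,\infty)}|K(s,\xi)|\,|x(\xi)|\,d\xi\le\overline b(s)\,\|x\|_J$ with $\overline b$ as in \eqref{coef_def}; this is precisely why the quantities in \eqref{coef_def} are intersected with $[t_0,\infty)$. Substituting these bounds and taking the essential supremum over $t\in J$, the hypothesis of the theorem gives
$$
\|x\|_J\le\Bigl(\int_{t_0}^t e^{-\int_s^t a_0(\xi)d\xi}\bigl[\textstyle\sum_{k=1}^m|a_k(s)|+\overline b(s)\bigr]ds\Bigr)\|x\|_J+\|f_0\|_{[t_0,\infty)}\le\alpha\|x\|_J+\tfrac{M}{\lambda}\|f\|_{[t_0,\infty)}.
$$
As $\alpha<1$, this yields $\|x\|_J\le \dfrac{M\,\|f\|_{[t_0,\infty)}}{\lambda(1-\alpha)}=:\beta$, and $\beta$ is independent of $t_1$; letting $t_1\to\infty$ gives $|x(t)|\le\beta$ on $[t_0,\infty)$, so $x\in{\bf C}[t_0,\infty)$, and Lemma~\ref{lemma2.1} shows that \eqref{2.10} is UES.

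I do not expect a deep obstacle. The only points that require a little care are the bookkeeping at the left endpoint --- that the terms $x(h_k(s))$ and the integral over $[g(s),s]$ whose arguments fall below $t_0$ are controlled thanks to $x\equiv 0$ there --- and verifying that the constant $\beta$ is finite and independent of $t_1$, which rests entirely on the UES of the associated ODE $\dot x+a_0(t)x=0$ through the uniform bound $M/\lambda$ on $\int_{t_0}^t e^{-\int_s^t a_0(\xi)d\xi}\,ds$.
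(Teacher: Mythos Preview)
Your proposal is correct and follows exactly the approach the paper indicates: the paper states that Theorem~\ref{theorem2.2} ``is justified similarly to Theorems~\ref{theorem2.1} and~\ref{theorem2.1a}'', and your argument is precisely the natural simplification of the proof of Theorem~\ref{theorem2.1}a) in which the non-delay term $a_0(t)x(t)$ obviates the derivative estimate~\eqref{2.6b}. The bookkeeping you flag at the left endpoint and the role of UES of the auxiliary ODE in bounding $f_0$ are handled correctly.
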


\begin{corollary}\label{c2.4}
Let $a_0(t)=\tilde{a}(t)+\alpha(t)$, where 
$$\tilde{a}(t)\geq \beta>0,~ \sup_{t\geq s\geq  t_0}\left|\int_s^t \alpha(\xi)d\xi\right|\leq \alpha_0<\infty,$$ 
$\overline{b}$ is denoted in \eqref{coef_def} and
\begin{equation*}
\sum_{k=1}^m \left\|\frac{a_k}{\tilde{a}}\right\|_{[t_0,\infty)}
+\left\|\frac{\overline{b}}{\tilde{a}}\right\|_{[t_0,\infty)}<e^{-\alpha_0}.
\end{equation*}
Then Eq.~\eqref{2.10} is UES.
\end{corollary}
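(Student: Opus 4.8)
The plan is to deduce Corollary~\ref{c2.4} directly from Theorem~\ref{theorem2.2} by checking both of its hypotheses. First I would verify that the ODE $\dot{x}(t)+a_0(t)x(t)=0$ is UES. Writing its fundamental function as $X(t,s)=e^{-\int_s^t a_0(\xi)d\xi}=e^{-\int_s^t\tilde{a}(\xi)d\xi}\,e^{-\int_s^t\alpha(\xi)d\xi}$ and using $\tilde{a}(t)\geq\beta>0$ together with $\sup_{t\geq s\geq t_0}|\int_s^t\alpha(\xi)d\xi|\leq\alpha_0$, one obtains $|X(t,s)|\leq e^{\alpha_0}e^{-\beta(t-s)}$, so the ODE is UES. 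This is exactly the mechanism used in Lemma~\ref{lemma1.1}c) and in the proof of Theorem~\ref{theorem2.1a}a).

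Next I would estimate the integral in the hypothesis of Theorem~\ref{theorem2.2}. Using the pointwise bound $e^{-\int_s^t a_0(\xi)d\xi}\leq e^{\alpha_0}e^{-\int_s^t\tilde{a}(\xi)d\xi}$ and the a.e.\ estimates $|a_k(s)|\leq\tilde{a}(s)\|a_k/\tilde{a}\|_{[t_0,\infty)}$ and $\overline{b}(s)\leq\tilde{a}(s)\|\overline{b}/\tilde{a}\|_{[t_0,\infty)}$ (legitimate since $\tilde{a}\geq\beta>0$), I would factor $\tilde{a}(s)$ out of the bracket to get
$$
\int_{t_0}^t e^{-\int_s^t a_0(\xi)d\xi}\left[\sum_{k=1}^m|a_k(s)|+\overline{b}(s)\right]ds\leq e^{\alpha_0}\left(\sum_{k=1}^m\left\|\frac{a_k}{\tilde{a}}\right\|_{[t_0,\infty)}+\left\|\frac{\overline{b}}{\tilde{a}}\right\|_{[t_0,\infty)}\right)\int_{t_0}^t e^{-\int_s^t\tilde{a}(\xi)d\xi}\tilde{a}(s)\,ds.
$$
Since the last integrand is $\frac{d}{ds}e^{-\int_s^t\tilde{a}(\xi)d\xi}$, the last integral equals $1-e^{-\int_{t_0}^t\tilde{a}(\xi)d\xi}\leq 1$. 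Hence the left-hand side is bounded by $\alpha:=e^{\alpha_0}\bigl(\sum_{k=1}^m\|a_k/\tilde{a}\|_{[t_0,\infty)}+\|\overline{b}/\tilde{a}\|_{[t_0,\infty)}\bigr)$, which is strictly less than $1$ precisely by the assumed inequality. Theorem~\ref{theorem2.2} then gives that Eq.~\eqref{2.10} is UES.

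I do not expect a genuine obstacle here; the argument is the exact analogue of the proof of Theorem~\ref{theorem2.1a}a) with the bracket of Theorem~\ref{theorem2.2} replacing that of Theorem~\ref{theorem2.1}a). The only points needing care are routine: the inequalities $|a_k(s)|\leq\tilde{a}(s)\|a_k/\tilde{a}\|_{[t_0,\infty)}$ hold only almost everywhere, which does not affect the integrals, and the quotients $a_k/\tilde{a}$ and $\overline{b}/\tilde{a}$ belong to ${\bf L}_\infty[t_0,\infty)$ only because $\tilde{a}$ is essentially bounded away from zero by $\beta$.
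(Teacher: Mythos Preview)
Your proposal is correct and follows exactly the route the paper indicates: the paper does not spell out a proof of Corollary~\ref{c2.4} but states that it is ``justified similarly to Theorems~\ref{theorem2.1} and \ref{theorem2.1a},'' and your argument is precisely the analogue of the proof of Theorem~\ref{theorem2.1a}a) with the bracket of Theorem~\ref{theorem2.2} in place of that of Theorem~\ref{theorem2.1}a). The key steps---the exponential estimate $|X(t,s)|\leq e^{\alpha_0}e^{-\beta(t-s)}$, factoring $\tilde{a}(s)$ out of the bracket, and the bound $\int_{t_0}^t e^{-\int_s^t\tilde{a}(\xi)d\xi}\tilde{a}(s)\,ds\leq 1$---match the paper's own computation line for line.
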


Theorem \ref{theorem2.2} and Corollary~\ref{c2.4} are justified similarly to  Theorems~\ref{theorem2.1}
and \ref{theorem2.1a}. 

\section{Applications}
\label{applications}

Consider a generalized Hutchinson equation with the control-type term
\begin{equation}
\label{3.1}
\dot{N}(t) = N(t) \left. \left. \left.\left. \sum_{j=1}^m r_j(t) \right[ K-N(h_j(t)) \right] - u(t) \right[ N(g(t))-K \right].
\end{equation}
Here the last term corresponds to external impact on the population. If the population  exceeds the carrying capacity $K$   for a period of time, $u(t)>0$ can be interpreted as harvesting.
If the population is less than $K$  for $u(t)>0$, there is external restocking, e.g. adding some juveniles to the pond.
The possibility of sign-changing control $u(t)$, with positive $u$ prevailing, can describe control volatility when
at certain periods of time the control can become destabilizing. In addition to control interpretation, the term including $u$ can describe seasonal movements attracting overpopulated areas for negative $u$ (for example, during mating seasons) and areas with abundant resources for positive $u$.  With seasonality, we can get a $T$-periodic coefficient $u$.

Similar interpretation works for a modified Mackey-Glass model with a control term 
\begin{equation}
\label{3.2}
\dot{N}(t) = r(t)\left[ \frac{a N(h(t))}{1+ N^{\gamma}(h(t))} - b N(t)\right] - u(t) \left[ N(g(t))- N^{\ast} \right], a>b>0,  \gamma > 0,
\end{equation}
where $u(t)$ can be positive or negative, and the unique positive equilibrium is $N^{\ast} = (a/b-1)^{1/\gamma}$.

To apply Corollary~\ref{c2.2} to Eq.~\eqref{3.1} assume that $r_j, u$ are 
measurable essentially bounded for $t \in [t_0,\infty)$, $h_k$ and $g$ are 
measurable, $0\leq t-h_j(t)\leq \tau_j$, $0\leq t-g(t)\leq \sigma$ and  $K>0$.

\begin{guess}\label{theorem3.1}
Let $t_0\geq 0$, $a_0 >0$ and $\alpha_0>0$ be such that for $t\geq t_0$,
$$
r_j(t)=R_j(t)+\bar{r}_j (t), ~~u(t)=U(t)+\bar{u}(t),
$$
\begin{equation}
\label{3.3}
K\sum_{j=1}^m R_j (t) +U(t)\geq a_0, ~~\sup_{t\geq s\geq t_0} \left|\int_s^t \left[K\sum_{j=1}^m \bar{r}_j(\xi) +\bar{u}(\xi)\right]d\xi\right| \leq \alpha_0
\end{equation}
and
\begin{equation}
\label{3.4}
\begin{array}{ll}
& \displaystyle \left( K\sum_{j=1}^m \|r_j\|_{[t_0,\infty)}+\|u\|_{[t_0,\infty)} \right) 
\displaystyle  \left[\sum_{j=1}^m K\tau_j \left\|\frac{r_j}{K\sum_{i=1}^m R_i +U}\right\|_{[t_0,\infty)} \right.
\vspace{2mm} \\
+ & \displaystyle \left. \sigma\left\|\frac{u}{K\sum_{i=1}^m
R_i +U}\right\|_{[t_0,\infty)}\right]
<e^{-\alpha_0}.
\end{array}
\end{equation}
Then, the positive equilibrium $K$ of Eq.~\eqref{3.1} is locally UES.
\end{guess}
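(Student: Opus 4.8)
The plan is to linearize Eq.~\eqref{3.1} about the equilibrium $N\equiv K$ and then check that the resulting linear equation satisfies the hypotheses of Corollary~\ref{c2.2}. Substituting $N(t)=K+x(t)$ and using $K-N(h_j(t))=-x(h_j(t))$, $N(g(t))-K=x(g(t))$, a direct computation gives
\[
\dot x(t)+K\sum_{j=1}^m r_j(t)x(h_j(t))+u(t)x(g(t))
= -x(t)\sum_{j=1}^m r_j(t)x(h_j(t)),
\]
where the right-hand side is quadratic in $x$, hence of higher order near $x=0$. The associated variational equation is
\[
\dot x(t)+\sum_{j=1}^m \big(Kr_j(t)\big)x(h_j(t))+u(t)x(g(t))=0,
\]
which has the form treated in Corollary~\ref{c2.2} with $m+1$ delay terms: $a_j=Kr_j$, $h_j$ for $j=1,\dots,m$, and $a_{m+1}=u$, $h_{m+1}=g$, $\tau_{m+1}=\sigma$; these coefficients are measurable and essentially bounded by the standing assumptions on $r_j,u$.

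Next I would match the quantities appearing in Corollary~\ref{c2.2}. Summing, $a(t)=\sum_{k=1}^{m+1}a_k(t)=K\sum_{j=1}^m r_j(t)+u(t)$, and the decompositions $r_j=R_j+\bar r_j$, $u=U+\bar u$ yield $a=\tilde a+\alpha$ with $\tilde a(t)=K\sum_{j=1}^m R_j(t)+U(t)$ and $\alpha(t)=K\sum_{j=1}^m\bar r_j(t)+\bar u(t)$; condition \eqref{3.3} states precisely that $\tilde a(t)\geq a_0>0$ and $\sup_{t\geq s\geq t_0}\big|\int_s^t\alpha(\xi)\,d\xi\big|\leq\alpha_0$. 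Moreover $\sum_{k=1}^{m+1}\|a_k\|_{[t_0,\infty)}=K\sum_{j=1}^m\|r_j\|_{[t_0,\infty)}+\|u\|_{[t_0,\infty)}$ and $\sum_{k=1}^{m+1}\tau_k\|a_k/\tilde a\|_{[t_0,\infty)}=\sum_{j=1}^m K\tau_j\|r_j/\tilde a\|_{[t_0,\infty)}+\sigma\|u/\tilde a\|_{[t_0,\infty)}$, so the smallness assumption of Corollary~\ref{c2.2} becomes exactly \eqref{3.4}. Applying that corollary, the variational equation is UES.

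Finally I would pass from UES of the linearization to local UES of the equilibrium $K$ of \eqref{3.1}. Writing the nonlinear equation as $\dot x(t)+\sum_{k=1}^{m+1}a_k(t)x(h_k(t))=F(t,x_t)$ with $F(t,x_t)=-x(t)\sum_j r_j(t)x(h_j(t))$, I would use the Cauchy (fundamental-function) representation of solutions of the linear part, which decays exponentially by the previous step, together with the bound $|F(t,x_t)|\leq C\,\|x\|\,|x(t)|$ valid as long as $x$ stays in a fixed neighbourhood of $0$; a standard Gronwall/fixed-point argument (in the spirit of \cite{AS}) then shows that for sufficiently small initial data the solution exists globally, remains small, and satisfies an estimate of the form \eqref{2.4}. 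I expect this last step to be the main technical obstacle: one must verify that the neighbourhood size and the decay rate can be chosen uniformly in $t_0$, so that the conclusion is genuinely a \emph{uniform} exponential estimate; by contrast, the linearization and the term-by-term identification of \eqref{3.3}--\eqref{3.4} with the hypotheses of Corollary~\ref{c2.2} are routine.
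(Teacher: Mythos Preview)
Your proposal is correct and follows essentially the same route as the paper: substitute to shift the equilibrium to zero, linearize, identify the linear equation with the setting of Corollary~\ref{c2.2} (treating $u$ as the $(m{+}1)$-st delay term with $\tau_{m+1}=\sigma$), verify that \eqref{3.3}--\eqref{3.4} are exactly the hypotheses of that corollary, and then invoke the principle of linearized stability. The paper uses the substitution $x=K-N$ rather than your $x=N-K$, which is immaterial; on the final passage from UES of the linearization to local UES of the equilibrium the paper simply asserts the conclusion, so your more explicit discussion of the fundamental-solution/Gronwall argument and the uniformity-in-$t_0$ issue is, if anything, more careful than the original.
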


\begin{proof}
Substituting $K-N(t)=x(t)$ transforms Eq.~\eqref{3.1} to the model 
\begin{equation}
\label{3.5}
\dot{x}(t) = -(K-x(t))\sum_{j=1}^m r_j(t)  x(h_j(t))  -  u(t)x(g(t))
\end{equation}
with the zero equilibrium. 
After linearization Eq.~\eqref{3.5} becomes
\begin{equation}
\label{3.6}
\dot{y}(t) = -K\sum_{j=1}^m r_j(t)  y(h_j(t))  -u(t)y(g(t)).
\end{equation}
Denote 
$$
a(t)=K\sum_{j=1}^m r_j (t)+ u(t),~~ \tilde{a}(t)=K\sum_{j=1}^m R_j (t)+U(t), ~~~\alpha(t)=K\sum_{j=1}^m \bar{r}_j (t)+\bar{u}(t).
$$
By \eqref{3.3} and \eqref{3.4}, Eq.~\eqref{3.6} satisfies the assumptions of Corollary~\ref{c2.2}.
Hence Eq.~\eqref{3.6} is UES. Then the zero equilibrium of \eqref{3.5} and therefore
the positive equilibrium $K$ of \eqref{3.1}
are locally UES. 
\end{proof}

\begin{corollary}\label{c3.1}
Let $t_0\geq 0$, $a_0 >0$ and $\alpha_0>0$ be such that anyone of the  
hypotheses a)-b) is satisfied for $t\geq t_0$:

a) $r_j(t)=R_j(t)+\bar{r}_j(t),$ 
$$
K\sum_{j=1}^m R_j(t)\geq a_0, ~~\sup_{t\geq s\geq t_0} \left|\int_s^t K\sum_{j=1}^m\bar{r}_j(\xi)d\xi\right| \leq \alpha_0,
$$
\begin{align*}
& \left( K\sum_{j=1}^m \|r_j \|_{[t_0,\infty)}+\|u\|_{[t_0,\infty)} \right) \times
\\ \times & \left[\sum_{j=1}^m \tau_j \left\|\frac{r_j}{\sum_{i=1}^m R_i}\right\|_{[t_0,\infty)}+\sigma\left\|\frac{u}{K\sum_{i=1}^m
R_i}\right\|_{[t_0,\infty)}\right]
<e^{-\alpha_0};
\end{align*}

b) $u(t)=U(t)+\bar{u}(t),$
$$
U(t)\geq a_0, ~~\sup_{t\geq s\geq t_0} \left|\int_s^t\bar{u}(\xi)d\xi\right| \leq \alpha_0,
$$
$$
\left( K\sum_{j=1}^m \|r_j\|_{[t_0,\infty)}+\|u\|_{[t_0,\infty)} \right)
\left[\sum_{j=1}^m K\tau_j\left\|\frac{r_j}{U}\right\|_{[t_0,\infty)}+\sigma\left\|\frac{u}{U}\right\|_{[t_0,\infty)}\right]
<e^{-\alpha_0}.
$$
Then the positive equilibrium $K$ of Eq.~\eqref{3.1} is locally UES.
\end{corollary}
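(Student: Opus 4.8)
The plan is to deduce both cases of Corollary~\ref{c3.1} from Corollary~\ref{c2.2}, applied to the linearization of \eqref{3.1}; the two cases differ only in which part of the aggregate coefficient is declared to be the positive ``stabilizing'' term. First I would reuse the opening of the proof of Theorem~\ref{theorem3.1}: the substitution $x(t)=K-N(t)$ carries \eqref{3.1} into \eqref{3.5}, whose linearization about the zero equilibrium is \eqref{3.6}, so it suffices to show that \eqref{3.6} is UES. Then I would read \eqref{3.6} as a particular case of the equation in Corollary~\ref{c2.2} with $m+1$ delay terms: coefficients $a_j(t)=Kr_j(t)$ with arguments $h_j$ and delay bounds $\tau_j$ for $j=1,\dots,m$, together with the extra term $a_{m+1}(t)=u(t)$, argument $g$, delay bound $\tau_{m+1}=\sigma$. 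With this labelling $\sum_{k=1}^{m+1}a_k(t)=K\sum_{j=1}^m r_j(t)+u(t)=:a(t)$ and the constant $A$ of \eqref{coef_def} equals $K\sum_{j=1}^m\|r_j\|_{[t_0,\infty)}+\|u\|_{[t_0,\infty)}$, the common first factor in the smallness conditions of Corollary~\ref{c3.1}.

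For case a) I would set $\tilde a(t)=K\sum_{j=1}^m R_j(t)$, so that $a(t)=\tilde a(t)+\alpha(t)$ with $\alpha(t)=K\sum_{j=1}^m\bar r_j(t)+u(t)$; the first line of a) supplies $\tilde a(t)\ge a_0>0$ and the required uniform bound on the primitives of the oscillatory remainder, whence Lemma~\ref{lemma1.1}(c) gives that $\dot x+a(t)x=0$ is UES. After rewriting $r_j/\sum_i R_i=Kr_j/(K\sum_i R_i)=a_j/\tilde a$ and $u/(K\sum_i R_i)=a_{m+1}/\tilde a$, the second displayed inequality of a) is exactly $A\big(\sum_{k=1}^{m+1}\tau_k\|a_k/\tilde a\|_{[t_0,\infty)}\big)<e^{-\alpha_0}$, i.e.\ the hypothesis of Corollary~\ref{c2.2}, so \eqref{3.6} is UES. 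Case b) runs identically with $\tilde a(t)=U(t)\ge a_0>0$ and $\alpha(t)=K\sum_{j=1}^m r_j(t)+\bar u(t)$, the denominators in the smallness condition now being $U$. In either case, UES of \eqref{3.6} yields local UES of the zero equilibrium of \eqref{3.5}, hence local UES of the equilibrium $K$ of \eqref{3.1}.

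The only genuinely delicate point, exactly as in the proof of Theorem~\ref{theorem3.1}, is the bookkeeping: one must check that the chosen $\tilde a$ is bounded below by a positive constant while the complementary part of $a$ has uniformly bounded integrals (so that Lemma~\ref{lemma1.1} does certify uniform exponential stability of the associated ODE), and then verify that the term-by-term rearrangement of the fractions turns the smallness inequality of Corollary~\ref{c3.1} into that of Corollary~\ref{c2.2}. Beyond this translation, already performed for Theorem~\ref{theorem3.1}, no new ideas are required.
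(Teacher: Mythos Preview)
Your overall strategy is the intended one: linearize to obtain \eqref{3.6}, read it as an $(m{+}1)$-term equation, and invoke Corollary~\ref{c2.2}. The identification $A=K\sum_j\|r_j\|_{[t_0,\infty)}+\|u\|_{[t_0,\infty)}$ and the rewriting of the bracketed factor as $\sum_k\tau_k\|a_k/\tilde a\|$ are correct. The gap is in the verification of the hypothesis on the oscillatory remainder. In case~a) you take $\tilde a=K\sum_jR_j$, hence $\alpha=a-\tilde a=K\sum_j\bar r_j+u$, and then assert that ``the first line of a) supplies \dots the required uniform bound on the primitives of the oscillatory remainder''. It does not: the hypothesis of part~a) bounds only $\sup_{t\ge s}\bigl|\int_s^tK\sum_j\bar r_j(\xi)\,d\xi\bigr|$, whereas Corollary~\ref{c2.2} (via Lemma~\ref{lemma1.1}c)) needs $\sup_{t\ge s}\bigl|\int_s^t\alpha(\xi)\,d\xi\bigr|<\infty$, and nothing in part~a) controls $\int_s^t u$. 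Without such control the ODE $\dot x+(K\sum_jr_j+u)x=0$ need not be UES; e.g.\ with $m=K=1$, $r_1\equiv R_1\equiv1$, $\bar r_1\equiv0$, $u\equiv-2$, $\tau_1=\sigma=0.1$, $\alpha_0=0.05$, every displayed hypothesis of part~a) holds, yet \eqref{3.6} with $h_1(t)=g(t)=t$ is $\dot y=y$. The same defect occurs symmetrically in part~b), where your $\alpha=K\sum_jr_j+\bar u$ but only $\int\bar u$ is assumed bounded.

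In other words, your argument fails at exactly the point where the printed statement is itself deficient. The corollary becomes a true specialization of Theorem~\ref{theorem3.1} (take $U\equiv0$, $\bar u=u$ for a); $R_j\equiv0$, $\bar r_j=r_j$ for b)) only if the integral bound in \eqref{3.3} is retained in full, i.e.\ if the displayed hypotheses in a) and b) are strengthened to
\[
\sup_{t\ge s\ge t_0}\Bigl|\int_s^t\Bigl(K\sum_{j=1}^m\bar r_j(\xi)+u(\xi)\Bigr)d\xi\Bigr|\le\alpha_0
\quad\text{and}\quad
\sup_{t\ge s\ge t_0}\Bigl|\int_s^t\Bigl(K\sum_{j=1}^m r_j(\xi)+\bar u(\xi)\Bigr)d\xi\Bigr|\le\alpha_0,
\]
respectively. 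With that amendment your proof goes through verbatim.
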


Consider now Eq.~\eqref{3.2}.

Let $r(t)\geq 0$ be 
essentially bounded on $[t_0,\infty)$,
$0\leq t-h(t)\leq \tau$ and $ 0\leq t-g(t)\leq \sigma$. 
Denote
$\displaystyle \nu :=\frac{b}{a}[\gamma (a-b)-a]$, recall that $N^*=(a/b-1)^{1/\gamma}$  and note that $\nu+b=b\gamma(a-b)/a>0$.

\begin{guess}\label{theorem3.2}
Let $t_0\geq 0$, $r_0>0$ and $\alpha_0>0$ be such that for $t\geq t_0$,
\begin{equation}
\label{3.7}
r(t)\geq r_0>0, ~~\sup_{t\geq s\geq t_0} \left|\int_s^t u(\xi)d\xi\right| \leq \alpha_0,
\end{equation}
\begin{equation}
\label{3.8}
\left( (b+\nu)\|r\|_{[t_0,\infty)}+\|u\|_{[t_0,\infty)}\left)\left(\frac{\tau\nu}{b+\nu}+\frac{\sigma}{b+\nu}\left\|\frac{u}{r}\right\|_{[t_0,\infty)}\right) \right.\right.
<e^{-\alpha_0}.
\end{equation}
Then the positive equilibrium $N^*$ of Eq.~\eqref{3.2} is locally UES.
\end{guess}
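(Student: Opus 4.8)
The plan is to follow the scheme used for Theorem~\ref{theorem3.1}: move the positive equilibrium to the origin, linearize, recognize the linearization as a particular case of \eqref{2.1}, and invoke one of the corollaries of Section~\ref{prelim}. First I would set $x(t)=N(t)-N^{\ast}$; since $1+(N^{\ast})^{\gamma}=a/b$, the equilibrium relation $\frac{aN^{\ast}}{1+(N^{\ast})^{\gamma}}=bN^{\ast}$ holds, so \eqref{3.2} turns into an equation for $x$ with zero equilibrium. The only derivative needed to linearize it is that of the production function $f(N)=\frac{aN}{1+N^{\gamma}}$ at $N=N^{\ast}$; using $(N^{\ast})^{\gamma}=a/b-1$ one computes $f'(N^{\ast})=\frac{b}{a}\bigl[a-\gamma(a-b)\bigr]=-\nu$, which is exactly where $\nu$ and the sign condition $\nu+b=b\gamma(a-b)/a>0$ enter.

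With this, the linearization of the transformed equation around the origin is
\begin{equation*}
\dot{y}(t)+br(t)y(t)+\nu r(t)y(h(t))+u(t)y(g(t))=0 .
\end{equation*}
The key observation is to treat the non-delay term $br(t)y(t)$ as a delay term with zero delay, so that this equation has the form $\dot y(t)+\sum_{k=0}^{2}a_k(t)y(h_k(t))=0$ with $a_0=br$, $h_0(t)=t$, $\tau_0=0$; $a_1=\nu r$, $h_1=h$, $\tau_1=\tau$; $a_2=u$, $h_2=g$, $\tau_2=\sigma$; and no distributed term ($\overline{b}\equiv 0$). Then $a(t)=\sum_k a_k(t)=(b+\nu)r(t)+u(t)$, and I would write $a=\tilde a+\alpha$ with $\tilde a(t)=(b+\nu)r(t)\ge(b+\nu)r_0>0$ and $\alpha(t)=u(t)$, so that \eqref{3.7} supplies both the lower bound on $\tilde a$ and $\sup_{t\ge s\ge t_0}\bigl|\int_s^t\alpha(\xi)\,d\xi\bigr|\le\alpha_0$. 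Since the zero-delay term contributes nothing to $\sum_k\tau_k\|a_k/\tilde a\|_{[t_0,\infty)}$, that sum equals $\frac{\tau\nu}{b+\nu}+\frac{\sigma}{b+\nu}\|u/r\|_{[t_0,\infty)}$, while $\sum_k\|a_k\|_{[t_0,\infty)}=(b+\nu)\|r\|_{[t_0,\infty)}+\|u\|_{[t_0,\infty)}$; thus \eqref{3.8} is precisely the hypothesis of Corollary~\ref{c2.2}, which then yields uniform exponential stability of the linearized equation. (If $\gamma$ is so small that $\nu<0$, one reads $|\nu|$ in place of $\nu$, the inequality $b+\nu>0$ still holding.)

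Finally, local uniform exponential stability of $N^{\ast}$ for \eqref{3.2} follows from UES of the linearization by the standard linearized-stability argument already used in the proof of Theorem~\ref{theorem3.1}, the higher-order remainder in $x$ being controlled on a small neighborhood of the origin. I expect the main obstacles to be essentially preparatory rather than deep: getting the linearization coefficient right via the identity $f'(N^{\ast})=-\nu$ (which pins down the correct dominant positive part $(b+\nu)r$), and the bookkeeping that matches the product in \eqref{3.8} to the hypothesis of Corollary~\ref{c2.2} after the zero-delay reformulation; once these are in place the conclusion is immediate.
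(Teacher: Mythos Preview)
Your proposal is correct and follows essentially the same route as the paper: shift the equilibrium to the origin, compute the linearization $\dot y(t)+br(t)y(t)+\nu r(t)y(h(t))+u(t)y(g(t))=0$ via $f'(N^{\ast})=-\nu$, and apply Corollary~\ref{c2.2} with the decomposition $\tilde a(t)=(b+\nu)r(t)$, $\alpha(t)=u(t)$. Your explicit treatment of the non-delay term as a zero-delay summand (so that it contributes to $\sum_k\|a_k\|$ but not to $\sum_k\tau_k\|a_k/\tilde a\|$) and your remark about replacing $\nu$ by $|\nu|$ when $\nu<0$ are details the paper leaves implicit, but the argument is otherwise identical.
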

\begin{proof}
After the substitution $N(t)=N^*+x(t)$, Eq.~\eqref{3.2} transforms to
\begin{equation}
\label{3.9}
\dot{x}(t)=r(t)\left[\frac{a(N^*+x(h(t)))}{1+[N^*+x(h(t))]^{\gamma}}-b(N^*+x(t))\right]-u(t)x(g(t))
\end{equation}
with the zero equilibrium.
Since $b=\frac{a}{1+(N^*)^{\gamma}}$, Eq.~\eqref{3.9} is equivalent to
\begin{equation}
\label{3.10}
\dot{x}(t)=r(t)\left[a\left(\frac{N^*+x(h(t))}{1+[N^*+x(h(t))]^{\gamma}}-\frac{N^*}{1+(N^*)^{\gamma}} \right)-bx(t)\right]-u(t)x(g(t)).
\end{equation}
The function
$$
g(x)=a\left(\frac{N^*+x}{1+(N^*+x)^{\gamma}}-\frac{N^*}{1+(N^*)^{\gamma}}\right)
$$
satisfies
$
g(0)=0$, $\displaystyle g'(x)=a\frac{1+(N^*+x)^{\gamma}-\gamma(N^*+x)^{\gamma}}{(1+(N^*+x)^{\gamma})^2}$, \\ $\displaystyle 
g'(0)=a\frac{1+(1-\gamma)(N^*)^{\gamma}}{(1+(N^*)^{\gamma})^2}=-\nu.
$

Hence the linearization of \eqref{3.10} is
\begin{equation}
\label{3.11}
\dot{y}(t)=-\nu r(t)y(h(t))-br(t)y(t)-u(t)y(g(t)).
\end{equation}
Let
$\displaystyle
a(t)=(\nu+b)r(t)+u(t),~ \tilde{a}(t)=(\nu+b)r(t), ~\alpha(t)=u(t).
$
As $\nu+b>0$, we get $(\nu+b)r(t)>(\nu+b)r_0>0$. 
By \eqref{3.7},\eqref{3.8} and Corollary \ref{c2.2}, Eq.~\eqref{3.11} is UES.
Hence the zero equilibrium of \eqref{3.10} is locally UES.
Therefore the solution $N^*$ of Eq.~\eqref{3.2} is locally UES.
\end{proof}

\begin{remark}
\label{remark_Mackey}
It is easy to see that Theorem~\ref{theorem3.2} follows the lines of Corollary~\ref{c3.1}, Part a) with $\tilde{r}_k \equiv 0$. It is possible to get more general results, as in Theorem~\ref{theorem3.1}.
\end{remark}

\section{Examples}
\label{examples}

\begin{example}
\label{example4.1}
The delay equation 
\begin{equation}
\label{4.1}
\begin{array}{ll}
\dot{x}(t) & +0.1(0.9 - \sin 5t)x(t-0.1)+0.1(0.9+ \cos 10t)x(t-0.2) \\ & \displaystyle+\int_{g(t)}^t 
\left[ 0.1+0.2\cos(t-s) \right]\, x(s)ds=0,~~t-g(t)\leq \sigma,
~~t\geq 0
\end{array}
\end{equation}
has oscillatory coefficients and kernel.
Applying  Theorem~\ref{theorem2.1a} a), let us find the bound for the parameter $\sigma$ for which Eq.~\eqref{4.1} is UES. Let us estimate the parameters 
$$
a(t)=a_1(t)+a_2(t)=0.1(0.9 - \sin 5t)+0.1(0.9+ \cos 10t),~~ \tilde{a}(t)=0.18, 
$$$$
\alpha(t)=-0.1\sin 5t+0.1 \cos 10t,~~\overline{b}(t)=\int_{[g(t),t]\cap [0,\infty)} |0.1+0.2\cos(t-s)|ds,
$$$$
 \alpha_0=\sup_{t> s}\left|\int_s^t\alpha (\xi)d\xi\right|\leq 2\left(\frac{0.1}{5}+\frac{0.1}{10}\right)=0.06,
$$$$
\|a_1\|_{[0,\infty)}=\|a_2\|_{[0,\infty)}=0.19,~\|\overline{b}\|_{[0,\infty)}\leq 0.3\sigma,~A\leq 0.38+0.3\sigma.
$$
Inequality \eqref{2.8} holds if 
$$
(0.38+0.3\sigma) 0.3 \frac{0.19}{0.18} +\frac{0.3\sigma}{0.18} <e^{-0.06},
$$
which is true for $0<\sigma\leq 0.46628$. 
Hence Eq.~\eqref{4.1} is UES for  $0<\sigma\leq 0.46628$.
\end{example}

\begin{example}\label{example4.2} 
For the integro-differential equation
\begin{equation}
\label{4.2}	
\dot{x}(t)+\int_{t-0.1}^t [10+11\sin(t-10s)]x(s)ds=0, \quad t\geq 0,
\end{equation}
to check stability 
let us apply Corollary~\ref{c2.6}. We have
\begin{align*}
b(t)= & \int_{t-0.1}^t [10+11\sin(t-10s)]ds=1+1.1 (\cos 9t-\cos(9t-1))
\\
= &1-2.2 \sin (0.5) \sin(9t-0.5),
\\
\overline{b}(t)= &\int_{t-0.1}^t |10+11\sin(t-10s)|ds, ~~
\|\overline{b}(t)\|_{[t_0,\infty)}\leq 2.1.
\end{align*}
Introducing $\tilde{b}(t)=1$ and $\beta(t)=-2.2 \sin (0.5) \sin(9t-0.5)$, 
we get 
$$
\int_s^t \beta(\xi)d\xi=  -\int_s^t 2.2 \sin(0.5) \sin(9\xi-0.5) d\xi, $$ $$ 
\left|\int_s^t \beta(\xi)d\xi\right|\leq 4.4 \sin(0.5)/9\approx 0.2343858 =:\beta_0,
$$
$$
c(t)= \int_{t-0.1}^t (t-s)|10+11\sin(t-10s)|ds, $$ $$
|c(t)|\leq 21\int_{t-0.1}^t (t-s)ds=21 \frac{0.1^2}{2}=0.105.
$$
Hence
$$
e^{\beta_0}\|\overline{b}\|_{[t_0,\infty)}\left\|\frac{c}{\tilde{b}}\right\|_{[t_0,\infty)}\leq e^{0.2344}2.1\frac{0.105}{1}\approx 0.2787<1.
$$
Thus inequality  \eqref{c2.3_ineq} holds, and Eq.~\eqref{4.2}  is UES.
\end{example}

\begin{example}
\label{example4.3}
For the model with a discontinuous sign-changing coefficient
\begin{equation}
\label{4.3}	
\dot{x}(t)+a(t) x(t-\tau)=0, \quad t\geq 0,
\end{equation}
where
$\displaystyle
 a(t)=\left\{\begin{array}{rl}
\mu,& t\in [2n,2n+1),\\
-\beta,& t\in [2n+1,2n+2),\\
\end{array}\right.
$ ~
$n=0,1,2, \dots
$ ~
and $0<\beta<\mu$, 
let us apply Corollary~\ref{c2.2} for $m=1$.

Denote for $\beta<\lambda<\mu$ the positive and oscillating parts
$$
\tilde{a}(t)=
\left\{\begin{array}{rl}
\mu-\lambda, &t\in [2n,2n+1), \\
\lambda-\beta,&t\in [2n+1,2n+2),\\
\end{array}
\right.
\!\!
\alpha(t) =
\left\{\begin{array}{rl}
\lambda,&t\in [2n,2n+1), \\
-\lambda,&t\in [2n+1,2n+2), \\
\end{array}\right.
$$
$n=0,1,2, \dots~$,
respectively.

We have $\tilde{a}(t)\geq \min\{\mu-\lambda,\lambda-\beta\}>0$ and 
$a(t)=\tilde{a}(t)+\alpha(t)$. Applying Corollary \ref{c2.2}, we get
$$
\left|\int_s^t \alpha(\xi)d\xi\right|\leq \lambda:=\alpha_0,~ \|a\|_{[t_0,\infty)}=\mu,~ \left\|\frac{a}{\tilde{a}}\right\|_{[t_0,\infty)}
=\max  \left\{ \frac{\mu}{\mu-\lambda},\frac{\beta}{\lambda-\beta}  \right\}.
$$

Hence, if there exists $\lambda \in (\beta,\mu)$ such that
\begin{equation} \label{example_cond1}
\tau\mu \max\left\{\frac{\mu}{\mu-\lambda},\frac{\beta}{\lambda-\beta}\right\}e^{\lambda}<1,
\end{equation}
Eq.~\eqref{4.3} is UES.

For a sufficient explicit stability test, choose $\lambda$ as an arithmetic mean $\lambda=(\mu+\beta)/2$.
We have
$$
\max\left\{\frac{\mu}{\mu-\lambda},\frac{\beta}{\lambda-\beta}\right\}=\frac{2\mu}{\mu-\beta}.
$$
Hence the inequality
\begin{equation} 
\label{add_1}
\frac{2\tau\mu^2}{\mu-\beta}e^{(\mu+\beta)/2} < 1
\end{equation}
implies that Eq.~\eqref{4.3} is UES. 

 Similarly, if we choose the harmonic mean $\lambda=\frac{2\mu \beta}{\mu+\beta}$ then 
$$
 \max\left\{\frac{\mu}{\mu-\lambda},\frac{\beta}{\lambda-\beta}\right\}=\frac{\mu+\beta}{\mu-\beta}.
$$ 
Therefore the condition 
\begin{equation}
\label{add_2}
\tau \mu \frac{\mu+\beta}{\mu-\beta} e^{\frac{2\mu \beta}{\mu+\beta}} < 1
\end{equation}
also implies exponential stability of Eq.~\eqref{4.3}.
\end{example}

\begin{example}
\label{example4.4}
The modified Hutchinson equation for $t\geq 0$
\begin{equation}
\label{4.4}	
\begin{array}{ll}
\dot{N}(t)= & \displaystyle N(t)(0.5-0.75\cos(10t))(1-N(t-h_0|\sin (t)|) )
\vspace{2mm} \\ & \displaystyle -(0.5+0.75\sin(10t))(N(t-0.1\cos^2(t))-1)
\end{array}
\end{equation}
has locally UES positive equilibrium $K=1$ if the assumptions of Theorem~\ref{theorem3.1} hold for $m=1$. Let $K=1$, 
$r(t)=0.5-0.75\cos(10t)$, $R(t)=0.5$, $\bar{r}(t)=-0.75\cos(10t)$, $u(t)=0.5+0.75\sin(10t)$,
$U(t)=0.5,\bar{u}(t)=0.75\sin(10t)$, $h(t)=t-h_0|\sin (t)|,h_0>0$, $g(t)=t-0.1\cos^2(t)$.

Since
$$
R(t)+U(t)=1, ~~ \|r\|_{[0,\infty)}=1.25, ~~\|u\|_{[0,\infty)} =1.25 
$$ 
and
$$
\tau=h_0,\sigma=0.1,
~~\sup_{t\geq s\geq 0}\left|\int_s^t (\bar{r}(\xi)+\bar{u}(\xi))d\xi\right|\leq 0.15\sqrt{2}=\alpha,
$$
the conditions of Theorem~\ref{theorem3.1} hold if
$3(h_0\cdot 1.5+0.1\cdot 1.5)<e^{-0.15\sqrt{2}} $ which is satisfied for $h_0<\left(\frac{e^{-0.15\sqrt{2}}}{3}-0.15\right)/1.5$. Hence the equilibrium $K=1$ of
Eq.~\eqref{4.4} is locally UES if $h_0<0.0797$.
\end{example}

\section{Discussion}
\label{conclusion}

Let us recall that a scalar ODE  $\dot{x}(t)+a(t)x(t)=0$ is UES if the coefficient $a(t)$ is not necessarily positive,
but has to be positive in some integral sense. For linear FDEs, such as  Eq.~\eqref{1.1}, 
a usual assumption in stability tests is $a(t)\geq 0$, which does not hold for many real-world models with seasonal fluctuations.
In the present paper we obtained explicit tests when linear FDEs without assumptions on signs of coefficients are UES and deduced local UES conditions for controlled Hutchinson and  Mackey-Glass equations.
The results of the paper are illustrated with several examples.

Now proceed to comparison with known stability results for equations with oscillating coefficients. 
 
Proposition~\ref{p7} and Corollary~\ref{c2.2} can be compared using Example \ref{example4.3}. 
The proposition is applicable only for $b=\frac{\mu+\beta}{2}$. Other stability conditions in the proposition after some routine calculations give the following stability test for Eq.~\eqref{1.1}
\begin{equation}\label{5.4}
 \frac{(\mu-\beta)(2 \mu +\beta)}{\mu + \beta}<1,~~ e\tau\frac{\mu+\beta}{2}<1.
\end{equation}

To compare, let us fix $\mu = 11 \beta$. Then \eqref{5.4} becomes
$$\beta < \frac{6}{115}, ~~ \tau \beta < \frac{1}{6e} \, ,$$
while conditions \eqref{add_1} and \eqref{add_2} are
$$ 
\tau \beta e^{6 \beta} < \frac{5}{121} \quad \mbox{ and } \quad
\tau \beta e^{11 \beta/6} < \frac{1}{66} \, ,
$$
respectively.
Thus, for $\beta \geq \frac{6}{115} \approx 0.05217$, the criterion in Proposition~\ref{p7} cannot be applied. If we take $\beta = 1$, the bound for the delay $\tau < \tau^* \approx 0.0024$ given by \eqref{add_2} is sharper than $\tau < \tau^* \approx 0.0001$ described in \eqref{add_1}. For $\beta = 0.1$, the bound for the delay $\tau < \tau^* \approx 0.22678$ defined in \eqref{add_1} is sharper than $\tau < \tau^* \approx 0.1261$ in \eqref{add_2}. Further, for $\beta = 0.05$, already \eqref{5.4} holds for $\tau < 10/3$.
The bounds of $\tau^* \approx 0.61$ and $\tau^* \approx 0.276$ are worse in the case when Proposition~\ref{p7} works.

 Comparing Proposition~\ref{p8} with Theorem~\ref{theorem2.1b}, we observe
 that inequality \eqref{2.5a} implies  (for $m=1$ and without the integral term)
 the statement of Proposition~\ref{p8}, thus Theorem~\ref{theorem2.1b} generalizes this proposition.
Application of Theorem~\ref{theorem2.1}  to Eq.~\eqref{1.1} gives the stability test
%
  $$
  \tau \|a\|_{[t_0,\infty)}\int_{t_0}^t e^{-\int_s^t a(\xi)d\xi}|a(s)|ds\leq \alpha<1
  $$
which is less sharp than Proposition~\ref{p8}.
However, Theorems~\ref{theorem2.1} and \ref{theorem2.1b} are applicable 
 to more general non-autonomous equations than \eqref{1.1}. 
 
In addition, let us dwell on relevant areas and ideas
for future relevant research on equation and systems, whether with or without integral terms, including retarded arguments and oscillatory coefficients and/or kernels.
\begin{enumerate}
\item
It would be natural to extend results in the obvious direction when the reference exponentially stable ODE includes a partial sum of $a_k$, or some other combination of concentrated-distributed terms, as a coefficient. Also, a FDE can be used as a reference equation rather than ODE, see, for example, \cite{BB2007} where this idea was implemented.
\item
It is a challenging problem to investigate stability for systems of scalar linear delay differential equations with oscillating coefficients, as well as vector delay differential equations without any sign conditions on the entries of the matrices.
\item
For applied nonlinear equations with oscillating coefficients considered in the present paper we obtained local stability conditions. It would be interesting to explore global stability.
\item
For explicit UES conditions for all equations considered in the paper, we use only Part c) of Lemma \ref{lemma1.1}. It is also possible to obtain exponential stability tests applying other parts of the lemma.
\end{enumerate}

\section*{Acknowledgment}

E. Braverman was partially supported by Natural Sciences and Engineering Research Council of Canada, the grant number is RGPIN-2020-03934 in the framework of the Discovery Grant program. The authors are grateful to the anonymous referee whose thoughtful comments contributed to the current form of the paper.


\begin{thebibliography}{99}
\bibitem{AS}
N.\,V. Azbelev, P.\,M. Simonov, Stability of Differential
Equations with Aftereffect. {\em Stability and Control:
Theory, Methods and Applications}, {\bf 20}. Taylor $\&$ Francis, London, 
2003. 
\bibitem{Balderrama}
R. Balderrama, 
New results on the almost periodic solutions for a model of hematopoiesis with 
an oscillatory circulation loss rate, J. Fixed Point Theory Appl. 22 (2) (2020), Paper No. 42, 18 pp.
\bibitem{BB1}
L. Berezansky, E. Braverman,
On stability of some linear and nonlinear delay differential equations,
{
J. Math. Anal. Appl.} {
314} (2) (2006) 391--411.
\bibitem{BB2}
L. Berezansky, E. Braverman,
On exponential stability of linear differential equations with several
delays, {
J. Math. Anal. Appl.} {
324} (2) (2006) 1336-1355.
\bibitem{BB2007}
L. Berezansky, E. Braverman,
Explicit exponential stability conditions for linear 
differential equations with several delays, 
{
J. Math. Anal. Appl.} {
332} (1) (2007) 246--264.  
\bibitem{BB3}
L. Berezansky, E. Braverman,
On exponential stability of a linear delay differential equation with an oscillating coefficient,
 Appl. Math. Lett. 22 (12) (2009) 1833--1837.
\bibitem{BB4}
L. Berezansky, E. Braverman, 
Nonoscillation and exponential
stability of delay differential equations with oscillating coefficients,
{\em J. Dyn. Control Syst.} {
15 (1)} (2009) 63--82.
\bibitem{BB5}
L. Berezansky, E. Braverman, 
New stability conditions for linear differential equations with several
delays, Abstr. Appl. Anal. 2011, Art. ID 178568, 19 pp.
\bibitem{Gil}
M.\,I. Gil', 
Stability of delay differential equations with oscillating coefficients. Electron. J. Differential Equations 2010, No. 99, 5 pp.
\bibitem{Gil2}
M.\,I. Gil', 
Stability of functional differential equations with oscillating coefficients and distributed delays, Differ. Equ. Appl. 3 (1) (2011) 11--19.
\bibitem{GH}
I. Gy\H{o}ri, F. Hartung, Stability in delay perturbed differential and
difference equations, {\em  Topics in
functional differential and difference equations} (Lisbon, 1999),
{\em  Fields Inst. Commun.}, {\bf 29}, Amer. Math. Soc.,
Providence, RI, 2001, 181--194.
\bibitem{GP}
I. Gy\"{o}ri, M. Pituk, Stability criteria for linear delay 
differential equations, {
Differential Integral Equations} { 
10} (1997) 841--852.
\bibitem{GD}
S.\,A. Gusarenko, A.\,I. Domoshnitsky,  Asymptotic and  
oscillation properties of first-order linear scalar
functional-differential equations,
{
Differential Equations} {
25} (12) (1989) 1480--1491.
\bibitem{Hale}
J.\,K. Hale, S.\,M.  Verduyn Lunel, Introduction to
Functional Differential equations. {\em Applied Mathematical Sciences}, 
{\bf 99}. Springer-Verlag, New York, 1993.
\bibitem{Jiang}
A. Jiang,  
Pseudo almost periodic solutions for a model of hematopoiesis with an oscillating circulation loss rate, 
Math. Methods Appl. Sci. 39 (12) (2016) 3215--3225.
\bibitem{KolmMysh}
{V.\,B. Kolmanovskii, A.\,D. Myshkis}.
Introduction to the Theory and Applications of Functional-Differential Equations. Mathematics and 
its Applications {\bf 463}, Kluwer Academic Publishers, Dordrecht, 1999.
\bibitem{Kz}
T. Krisztin, On stability properties for one-dimensional
functional-differential equations,
{
Funkcial. Ekvac.} {
34} (1991) 241--256.
\bibitem{Li}
H. Li, B. Zhou, M. Hou, G. Duan,  
On the time-varying Halanay inequality with applications to stability analysis of time-delay systems,
 J. Franklin Inst. 358 (10) (2021), 5488--5512.
\bibitem{Long2}
Z. Long, Exponential convergence of a non-autonomous Nicholson's blowflies model with an oscillating death rate, 
Electron. J. Qual. Theory Differ. Equ. 2016, Paper No. 41, 7 pp.
\bibitem{Long}
Z. Long, W. Wang, 
Positive pseudo almost periodic solutions for a delayed differential neoclassical growth model. 
J. Difference Equ. Appl. 22 (12) (2016) 1893--1905.
\bibitem{SYC}
J.\,W.\,H. So, J.\,S. Yu, M.\,P. Chen, Asymptotic stability for scalar
delay differential equations, {\em Funkcial. Ekvac.} {
39} (1996) 
1--17.
\bibitem{StavBrav}
J.\,I.~Stavroulakis, E. Braverman,
Stability and oscillation of linear delay differential equations,
J. Differential Equations
{
293} (2021) 282--312.
\bibitem{Tang}
C. Tang, Y. Wu, 
Global exponential stability of nonresident computer virus models,
Nonlinear Anal. Real World Appl. 34 (2017), 149–158.
\bibitem{Xu}
C. Xu, P. Li, 
New proof on exponential convergence for cellular neural networks 
with time-varying delays, Bound. Value Probl. 2019, Paper No. 123, 10 pp.
\bibitem{XuLi}
Y. Xu, L. Li, 
Global exponential stability of an epidemic model with saturated and periodic incidence rate, 
Math. Methods Appl. Sci. 39 (13) (2016) 3650--3658.
\bibitem{Yoneyama}
T. Yoneyama, 
On the stability for the delay-differential equation $x'(t)=-a(t)f(x(t-r(t)))$,
J. Math. Anal. Appl. 120 (1) (1986) 271--275. 

\bibitem{Zhang1}
B. Zhang, 
Contraction mapping and stability in a delay-differential equation, Dynamic Syst. Appl. 4 (2004) 183–-190.

\bibitem{Zhang2}
B. Zhang, 
Fixed points and stability in differential equations with variable delays, Nonlinear Anal. 63 (2005) 
233--242.

\bibitem{Zhang3}
X. Zhang, 
Convergence analysis of a patch structure Nicholson’s
blowflies system involving an oscillating death rate, Journal of Experimental \& Theoretical Artificial
Intelligence (2021), DOI: 10.1080/0952813X.2021.1908433.


\end{thebibliography}
\end{document}